\newtheorem{thm}{Theorem}[section]
\newtheorem{coro}[thm]{Corollary}
\newtheorem{lemma}[thm]{Lemma}
\newtheorem{prop}[thm]{Proposition}
\theoremstyle{definition}
\newtheorem{defn}[thm]{Definition}
\newtheorem{remark}[thm]{Remark}
\numberwithin{equation}{thm}
\newcommand{\Gal}{\mathrm{Gal}}
\title{Integral embeddings of central simple algebras over number fields}
\author{Jiaqi Xie and Fei Xu}
\begin{document}
	\maketitle
	
	\begin{abstract} 
	Let $\mathcal A$ be a central simple algebra of degree $n\geq 3$ over a number field $k$ and $K/k$ be a field extension of degree $n$ inside $\mathcal A$. A  criterion for determining exactly when an order of $K$ can be embedded in an order of $\mathcal A$ is given. Various previous results have been generalized and recovered by applying this criterion. 	
	\end{abstract}

	
	\section{Introduction} \

As a consequence of class field theory, a finite extension $K/k$ of number fields of degree $n$ can be embedded into a central simple algebra $\mathcal A$ of degree $n$ over $k$ if and only if $K\otimes_k k_v$ can be embedded into $A\otimes_k k_v$ over $k_v$ for all primes $v$ of $k$ (see \cite[Proposition A.1]{PR}).  Suppose that $\mathcal A\otimes_k k_v= M_{m_v} (D_v)$ where $D_v$ is a central division algebra of degree $d_v$ for each prime $v$ of $k$ and $\mathfrak v$'s are the  primes of $K$ above $v$. Then $K\otimes_k k_v$ can be embedded into $A\otimes_k k_v$ if and only if $d_v | [K_{\mathfrak v} : k_v]$ for all $\mathfrak v$ above $v$ (see \cite[Proposition A.1]{PR}).

However, the local-global principle of integral embeddings of orders does not hold any more. Indeed, Chevalley in \cite[Th\'eor\`eme 10]{Ch} first pointed out that 
$$ \frac{\sharp \{\text{conjugacy classes of maximal orders in $M_n(k)$ where $\mathfrak{o}_K$ can be embedded} \} }{\sharp \{\text{conjugacy classes of maximal orders in $M_n(k)$} \}}= \frac{1}{[K \cap H_k : k]}$$
where $\mathfrak {o}_K$ is the ring of integers of $K$ and $H_k$ is the Hilbert class field of $k$. It is natural to investigate this problem for general central simple algebras. Indeed, Chinburg and Friedman studied such embeddings of maximal orders for quaternion algebras and established the Chevalley's result for quaternion algebras in \cite{CF}. Several extensions to other kinds of orders of quaternions have been given in \cite{CX}, \cite{GQ} and \cite{Lin}. The integral embeddings of orders for certain central simple algebras of higher degree was studied in \cite{Are}, \cite{Are0}, \cite{Are1}, \cite{LS} and \cite{LS1}. Since existence of integral embeddings of orders for central simple algebras is equivalent to existence of integral points of the corresponding homogeneous spaces, we study this issue by applying strong approximation with Brauer-Manin obstruction for homogeneous spaces in \cite{ref5}.  The main result of this paper is the following theorem (see Theorem \ref{rec}).

\begin{thm}\label{main} Let $\mathcal A$ be a central simple algebra of degree $n\geq 3$ over a number field $k$ and $K/k$ be a field extension of degree $n$ inside $\mathcal A$. Suppose that
$\Xi$ is an $\mathfrak o_k$-order of $K$. Then $\Xi$ can be embedded into an $\mathfrak o_k$-order $\Gamma$ of $\mathcal A$ as $\mathfrak o_k$-algebras if and only if  there is 
$g_v\in (\mathcal A\otimes_k k_v)^\times$ for every prime $v$ of $k$ with  $$\Xi \otimes_{\mathfrak o_{k}} \mathfrak o_{k_v}  \subset g_v^{-1} (\Gamma \otimes_{\mathfrak o_k} \mathfrak o_{k_v} ) g_v  \ \ \ \text{ for all $v< \infty_k$ } $$ and $g_v\in (\Gamma \otimes_{\mathfrak o_k} \mathfrak o_{k_v} )^\times$ for almost all $v$ such that 
$$ (N_v(g_v))_v \in k^\times \cdot N_{K/k} (\mathbb I_K )  $$
where $N_v$ is the reduced norm map from $\mathcal A\otimes_k k_v$ to $k_v$ and $\mathbb I_K$ is the idelic group of $K$. 
 \end{thm}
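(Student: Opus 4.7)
The plan is to realize the integral embedding problem as an integral point problem on a homogeneous space under $\mathrm{GL}_1(\mathcal A)$ and then invoke the strong approximation theorem with Brauer--Manin obstruction from \cite{ref5}.

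First I would use the Skolem--Noether theorem to reformulate. Fix the given embedding $\iota: K \hookrightarrow \mathcal A$; any $\mathfrak o_k$-algebra embedding $\Xi \hookrightarrow \Gamma$ extends $k$-linearly to an embedding $K \hookrightarrow \mathcal A$ of the form $\mathrm{Ad}(g)\circ \iota$ for some $g \in \mathcal A^\times$ with $g\Xi g^{-1}\subset \Gamma$, and similarly over each $\mathfrak o_{k_v}$. Hence the data $(g_v)_v$ in the statement is exactly an integral adelic point of the homogeneous space $X = G/T$, where $G = \mathrm{GL}_1(\mathcal A)$ and $T = \mathrm{GL}_1(K) = R_{K/k}\mathbb G_m$ is embedded via $\iota$. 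Condition $(1)$ says that $(g_v)$ is integral at each finite place for a natural $\mathfrak o_k$-model of $X$ built from $\Gamma$ and $\Xi$, and condition $(2)$ is the restricted product condition at almost all places.

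Necessity of the listed conditions is direct: for a global $g \in \mathcal A^\times$ with $g\Xi g^{-1}\subset \Gamma$, taking $g_v = g$ satisfies $(1)$ at every $v$ and lies in $(\Gamma\otimes\mathfrak o_{k_v})^\times$ at almost all $v$ (since $g$ is integral with unit reduced norm there), while $(N_v(g))_v$ is the diagonal image of $N(g)\in k^\times$, hence automatically in $k^\times\cdot N_{K/k}(\mathbb I_K)$. For sufficiency I would apply \cite{ref5} to $X = G/T$. Since $G$ is reductive with $G^{ab}=\mathbb G_m$ via the reduced norm and the composition $T\hookrightarrow G \twoheadrightarrow \mathbb G_m$ is exactly $N_{K/k}$ (because $K$ is a maximal subfield of $\mathcal A$), a standard computation of the algebraic Brauer group of $X$ through $\mathrm{Pic}(\bar X) \cong \mathrm{coker}(\widehat{G_{\bar k}}\to \widehat{T_{\bar k}})$ and Poitou--Tate duality shows that the Brauer--Manin obstruction on $(g_v)_v$ is measured by the image of $(N_v(g_v))_v$ in the cokernel $\mathbb I_k/(k^\times\cdot N_{K/k}(\mathbb I_K))$. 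Thus the obstruction vanishes precisely when the stated norm condition holds, and \cite{ref5} then produces a global $g\in \mathcal A^\times$ realizing the embedding.

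The main technical obstacle is this identification of the Brauer--Manin obstruction with the concrete norm-in-idele condition, via the explicit form of $\mathrm{Pic}(\bar X)$ (a permutation-type quotient encoding the Galois action on the embeddings of $K$) and the class field theoretic description of $\mathbb I_k/(k^\times N_{K/k}(\mathbb I_K))$. One can alternatively sidestep this abstract computation by a more direct route: using the norm condition, write $(N_v(g_v))_v = c\cdot N_{K/k}((\alpha_{\mathfrak v}))$ with $c\in k^\times$, and replace each $g_v$ by $g_v\alpha_v^{-1}$; this preserves $g_v(\Xi\otimes\mathfrak o_{k_v}) g_v^{-1}$ because $\alpha_v\in (K\otimes k_v)^\times$ commutes with $\Xi\otimes\mathfrak o_{k_v}$, and reduces to the case $N_v(g_v) = c$ for all $v$. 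Then strong approximation for the simply connected group $\mathrm{SL}_1(\mathcal A)$ produces a global element in a $v$-adic neighborhood small enough that the clopen containment conditions $g_v(\Xi\otimes\mathfrak o_{k_v}) g_v^{-1}\subset \Gamma\otimes\mathfrak o_{k_v}$ remain valid at the finitely many places where modification is needed.
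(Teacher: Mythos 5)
Your main route is essentially the paper's own proof: you realize $\mathfrak o_k$-algebra embeddings of $\Xi$ into $\Gamma$ as integral points of the homogeneous space $X_{K,\mathcal A}\cong G_{\mathcal A}/\mathrm{Res}_{K/k}(\mathbb G_m)\cong SG_{\mathcal A}/\Upsilon$, invoke the strong approximation with Brauer--Manin obstruction theorem of Colliot-Th\'el\`ene and Xu, compute the algebraic Brauer group from the characters of the norm-one torus (your $\mathrm{Pic}(\bar X)\cong\mathrm{coker}(\widehat{G_{\bar k}}\to\widehat{T_{\bar k}})$ is the same computation the paper performs as $\mathrm{Br}_1(X_{K,\mathcal A})/\mathrm{Br}(k)\cong \mathrm{H}^1(k,\widehat{\Upsilon})\cong \mathrm{Hom}(\Gal((k^{ab}\cap K)/k),\mathbb Q/\mathbb Z)$), and identify the pairing as the Artin map applied to $(N_v(g_v))_v$, which is exactly the condition $(N_v(g_v))_v\in k^\times\cdot N_{K/k}(\mathbb I_K)$; this matches the paper's argument step for step, with the ``main technical obstacle'' you name being precisely its Proposition on $\mathrm{Br}_1$ and the evaluation/cup-product diagrams. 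Your closing alternative --- normalizing by ideles of $K$ so that every local reduced norm equals one $c\in k^\times$ and then using strong approximation for $\mathrm{SL}_1(\mathcal A)$ --- is a genuinely different, more classical (Eichler-style) route that avoids the Brauer--Manin formalism, but as sketched it omits the step that makes it run: one must first realize $c$ as the reduced norm of a global element of $\mathcal A^\times$ (Hasse--Schilling--Maass/Eichler norm theorem, using that $c$ is a local reduced norm at the ramified real places) and divide by it to land in $\mathrm{SL}_1(\mathcal A)(\mathbb A_k)$, and at the infinitely many places outside the finite exceptional set one keeps the approximant inside $(\Gamma\otimes_{\mathfrak o_k}\mathfrak o_{k_v})^\times$, where $\Xi\otimes_{\mathfrak o_k}\mathfrak o_{k_v}\subset\Gamma\otimes_{\mathfrak o_k}\mathfrak o_{k_v}$ already holds, rather than merely ``close to'' the local data.
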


It should be pointed out that we do not touch optimal embeddings for central simple algebras of higher degree yet which were studied extensively for quaternions in \cite{Ma}, \cite{AAC}, \cite{Voi}, \cite{PX} and \cite{XY}.

Notation and Terminology are standard. Let $k$ be a number field, $\frak o_k$ be the ring of integers of $k$ and $\Omega_k$ be the set of all primes of $k$. For any $v\in \Omega_k$, we use $k_v$ to denote the completion of $k$ at $v$. The set of Archimedean primes of $k$ is denoted by $\infty_k$ and  a finite prime $v$ of $k$ is denoted by $v<\infty_k$. The adeles and ideles of $k$ are denoted by $\mathbb A_k$ and $\mathbb I_k$ respectively. We also use $\mathbb A_k^f$ and $\mathbb I_k^f$ to denote the finite adeles and the finite ideles of $k$ (without $\infty_k$-components) respectively. For any $v<\infty_k$ and a finite extension  $E/k_v$, the ring of integers of $E$ is denoted by $\frak o_E$. For any ring $R$, the group of units of $R$ is denoted by $R^\times$.  For a central simple algebra $\mathcal{A}$ of degree $n$ over $k$, we use $G_{\mathcal A}$ to denote a linear algebraic over $k$ which represents the functor
$$ ((  \text{commutative $k$-algebras})) \rightarrow ((\text{groups})); \ \ E \mapsto (E\otimes_k A)^\times  $$
and $SG_{\mathcal A}$ to be a closed subgroup of $G_{\mathcal A}$ over $k$ which represents the functor
$$ (( \text{commutative $k$-algebras})) \rightarrow ((\text{groups})); \ \ E \mapsto \ker ((E\otimes_{k} A)^\times\xrightarrow{N} E^\times)   $$
where $N$ is the reduced norm map. Then one has the exact sequence of linear algebraic groups
 \begin{equation}\label{exact}  1\longrightarrow SG_{\mathcal A} \longrightarrow G_{\mathcal A} \xrightarrow{ \ N \ } \mathbb G_m \longrightarrow 1  \end{equation} 
where $\mathbb G_m$ is the multiplicative group over $k$. For a variety $X$ over $k$, we write $\mathrm{Br}(X)=\mathrm{H}^2_{et}(X, \mathbb G_m)$ and the algebraic Brauer group of $X$ 
$$ \mathrm{Br}_1(X) = \ker (\mathrm{H}^2(X, \mathbb G_m) \rightarrow \mathrm{H}^2(X \times_k \bar k, \mathbb G_m)) $$
where $\bar k$ is a fixed algebraic closure of $k$.

\section{Class fields associated to orders}\

In this section, we list some basic arithmetic properties of central simple algebras over a number field which we need in the following section.  Some results have already been given in \cite{LS1}. We repeat them here for completeness.

\begin{lemma} \label{integral-norm} If $\frak o_{\mathcal A_v}$ is a  maximal order of a central simple algebra $\mathcal A_v$ over $k_v$ for a finite prime $v$ of $k$, then $N_v(\frak o_{\mathcal A_v}^\times)=\frak o_{k_v}^\times$ where $N_v$ is the reduced norm map of $\mathcal A_v$. 
\end{lemma}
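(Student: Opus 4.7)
The plan is to use Wedderburn's classification to reduce to a single standard maximal order and then to exhibit units realizing any prescribed integral norm via the maximal unramified subfield. By Wedderburn, $\mathcal A_v\cong M_m(D_v)$ where $D_v$ is a central division algebra over $k_v$ of some index $d=n/m$, and $D_v$ has a unique maximal order $\mathcal O_{D_v}$. Any two maximal orders of $\mathcal A_v$ are conjugate (a classical result for central simple algebras over a non-Archimedean local field, see e.g.\ Reiner's \emph{Maximal Orders}), and the reduced norm is invariant under conjugation, so without loss of generality I may take $\mathfrak o_{\mathcal A_v}=M_m(\mathcal O_{D_v})$ with unit group $M_m(\mathcal O_{D_v})^\times$.

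For the inclusion $N_v(\mathfrak o_{\mathcal A_v}^\times)\subset\mathfrak o_{k_v}^\times$ I would argue as follows: if $g\in M_m(\mathcal O_{D_v})^\times$, then both $g$ and $g^{-1}$ have entries in $\mathcal O_{D_v}$ and are hence integral over $\mathfrak o_{k_v}$, so $N_v(g)$ and $N_v(g)^{-1}=N_v(g^{-1})$ both lie in $\mathfrak o_{k_v}$, forcing $N_v(g)\in\mathfrak o_{k_v}^\times$. For the reverse inclusion, I would exploit the maximal unramified subfield $L_v\subset D_v$ of degree $d$ over $k_v$. Its ring of integers $\mathfrak o_{L_v}$ sits inside $\mathcal O_{D_v}$, and for $x\in L_v^\times\subset D_v^\times$ the reduced norm $N_v(x)$ coincides with $N_{L_v/k_v}(x)$ since $L_v$ is a maximal subfield. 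Given any $u\in\mathfrak o_{k_v}^\times$, I invoke surjectivity of $N_{L_v/k_v}\colon\mathfrak o_{L_v}^\times\to\mathfrak o_{k_v}^\times$ for unramified local extensions to produce $x\in\mathfrak o_{L_v}^\times$ with $N_{L_v/k_v}(x)=u$; then the block-diagonal matrix $g=\mathrm{diag}(x,1,\ldots,1)\in M_m(\mathcal O_{D_v})^\times$ satisfies $N_v(g)=u$.

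The only non-routine ingredient is the surjectivity of the norm on units of an unramified local extension, and this is the step I expect a careful reader to scrutinize; it is classical, however (reduce through the $1+\mathfrak p^n$ filtration to surjectivity of the norm on residue fields, which is a standard statement about finite cyclic Galois extensions of finite fields), so I would simply cite a reference such as Serre's \emph{Local Fields} rather than reproducing the argument here.
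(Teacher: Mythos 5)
Your proof is correct and follows essentially the same route as the paper: reduce by conjugacy of maximal orders to $M_m(\Delta_v)$, then obtain every unit of $\mathfrak o_{k_v}$ as a reduced norm via the degree-$d$ unramified subfield of $D_v$ and the surjectivity of the norm on units in unramified extensions. The only cosmetic differences are that you realize the reduction from $M_m(\Delta_v)$ to $\Delta_v$ by a block-diagonal element rather than citing Weil, and you spell out the easy inclusion $N_v(\mathfrak o_{\mathcal A_v}^\times)\subset\mathfrak o_{k_v}^\times$ which the paper leaves implicit.
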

\begin{proof} Since $\mathcal A_v \cong M_m(D_v)$ for some positive integer $m$ where $D_v$ is a central division algebra of degree $d$ over $k_v$ by \cite[Chapter IX, \S1, Theorem 1]{Weil},  
one has that $\frak o_{\mathcal A_v}$ is conjugate to $M_m(\Delta_v)$ where $\Delta_v$ is the maximal order of $D_v$ by \cite[(17.3) Theorem (i) and (ii)]{Rein}. By \cite[Chapter IX, \S2, Corollary 2 of Proposition 6]{Weil}, one only needs to prove that $N_0(\Delta_v^\times)=\frak o_{k_v}^\times$ where $N_0$ is the reduced norm of $D_v$.  By \cite[Chapter I, \S 4, Proposition 5 and Chapter IX, \S 4, Proposition 11]{Weil}, there is an unramified extension $F/k_v$ of degree $d$ inside $D_v$ such that  $$N_{F/k_v}(x) = N_0(x); \ \forall x\in F . $$   Since $\mathfrak o_F$ is contained in $\Delta_v$ and $N_{F/k_v}(\mathfrak o_F^\times)= \frak o_{k_v}^\times $ by \cite[Chapter VIII, \S1, Prop. 3]{Weil}, one concludes $N_0(\Delta_v^\times)=\frak o_{k_v}^\times$ as desired. 
\end{proof} 

\begin{lemma} \label{stab} Let $\mathcal A_v = M_m(D_v)$ be a central simple algebra over $k_v$ where $D_v$ is a central division algebra over $k_v$ for $v< \infty_k$. If $\Delta_v$ is the unique maximal order of $D_v$, then
$$ \{ g\in \mathcal A_v^\times : \ g M_{m} (\Delta_v) g^{-1}  = M_{m} (\Delta_v) \} = D_v^\times \cdot  M_{m} (\Delta_v)^\times \ \ \ \text{and} \ \ \ N_v(  D_v^\times \cdot  M_{m} (\Delta_v)^\times) = \frak o_{k_v}^\times (k_v^\times)^m $$ where $N_v$ is the reduced norm map of $\mathcal A_v$.
\end{lemma}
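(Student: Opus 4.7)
The plan is to handle the normalizer identity and the reduced norm identity in turn.

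For the normalizer, one inclusion is immediate: any $d\in D_v^\times$ satisfies $d\Delta_v d^{-1}=\Delta_v$ because the maximal order is unique, so the scalar matrix $dI_m$ normalizes $M_m(\Delta_v)$; and $M_m(\Delta_v)^\times$ trivially normalizes itself. For the reverse inclusion I would apply the noncommutative elementary divisor theorem over $\Delta_v$ (see Reiner, \S17) to write any $g\in \mathcal A_v^\times=GL_m(D_v)$ as $g=u_1\,t\,u_2$ with $u_1,u_2\in M_m(\Delta_v)^\times$ and $t=\mathrm{diag}(\pi_D^{a_1},\ldots,\pi_D^{a_m})$, where $\pi_D$ is a uniformizer of $\Delta_v$. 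The normalizing hypothesis on $g$ collapses (after conjugating by $u_1^{-1}$) to $tM_m(\Delta_v)t^{-1}=M_m(\Delta_v)$, and testing this condition on the matrix units $E_{ij}\in M_m(\Delta_v)$ yields $\pi_D^{a_i-a_j}\in\Delta_v$ for all $i,j$, hence $a_i=a_j$, so $t=\pi_D^a I_m$ for some integer $a$.

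Rewriting $g=u_1(\pi_D^a I_m)u_2=(\pi_D^a I_m)(\pi_D^{-a}u_1\pi_D^a)u_2$ and observing that entrywise conjugation by $\pi_D^{\pm a}$ preserves $\Delta_v$ (again by uniqueness of the maximal order), the middle factor lies in $M_m(\Delta_v)^\times$, so $g\in D_v^\times\cdot M_m(\Delta_v)^\times$ as required.

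For the reduced norm identity, for $d\in D_v^\times$ and $u\in M_m(\Delta_v)^\times$ one has $N_v(dI_m\cdot u)=N_0(d)^m\,N_v(u)$, where $N_0$ is the reduced norm of $D_v$: choosing a splitting $D_v\otimes_{k_v}\bar k_v\cong M_d(\bar k_v)$, so that $\mathcal A_v\otimes_{k_v}\bar k_v\cong M_{md}(\bar k_v)$, the scalar matrix $dI_m$ corresponds to the block-diagonal matrix with $m$ identical blocks representing $d$, whose determinant is $N_0(d)^m$. Lemma \ref{integral-norm} gives $N_v(u)\in\mathfrak o_{k_v}^\times$. Finally $N_0(D_v^\times)=k_v^\times$: indeed, from $\pi_D^d=u\pi_{k_v}$ with $u\in\Delta_v^\times$ one deduces $d\cdot v_{k_v}(N_0(\pi_D))=d$, so $N_0(\pi_D)$ is a uniformizer of $k_v$, which combined with $N_0(\Delta_v^\times)=\mathfrak o_{k_v}^\times$ yields surjectivity. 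Assembling these gives $N_v(D_v^\times\cdot M_m(\Delta_v)^\times)=(k_v^\times)^m\cdot\mathfrak o_{k_v}^\times$.

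The only nontrivial input is the noncommutative elementary divisor form for matrices over $\Delta_v$; once it is in hand the rest is a short calculation whose crux is the repeated use of the fact that conjugation by powers of $\pi_D$ stabilizes $\Delta_v$ because $\Delta_v$ is the unique maximal order of $D_v$.
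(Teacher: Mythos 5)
Your proof is correct and follows essentially the same route as the paper: the noncommutative elementary divisor decomposition over $\Delta_v$ (Reiner, \S 17), the observation that normalization forces all diagonal valuations to coincide so the diagonal part is a $D_v^\times$-scalar times a unit, and the norm computation via $N_v(dI_m)=N_0(d)^m$ together with Lemma \ref{integral-norm}. The only differences are cosmetic: you verify the equal-exponent step by testing on matrix units and prove $N_0(D_v^\times)=k_v^\times$ directly via the uniformizer, where the paper simply asserts the former and cites Reiner (33.1)/(33.4) for the latter.
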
 

\begin{proof}  For any $g\in \mathcal A_v^\times$ satisfying $g M_{m} (\Delta_v) g^{-1}  = M_{m} (\Delta_v)$, there are $S_1, S_2\in M_{m} (\Delta_v)^\times$ such that 
$$g = S_1  \cdot diag(\alpha_1, \cdots, \alpha_{m} ) \cdot S_2  \ \ \ \text{with} \ \ \ ord_v(\alpha_1)\leq \cdots \leq ord_v(\alpha_{m}) $$  by \cite[(17.7) Theorem]{Rein}, where $ord_v$ is the extension of $v$-adic valuation of $k_v$ over $D_v$. Since $$S_1^{-1} \cdot M_{m} (\Delta_v) \cdot S_1  = M_{m} (\Delta_v) \ \ \ \text{and} \ \ \ S_2 \cdot  M_{m} (\Delta_v) \cdot S_2^{-1}  = M_{m} (\Delta_v) , $$ one obtains $$ diag(\alpha_1, \cdots, \alpha_{m} ) \cdot  M_{m} (\Delta_v) \cdot diag(\alpha_1, \cdots, \alpha_{m} )^{-1}  = M_{m} (\Delta_v) . $$
This implies that $ord_v(\alpha_1)= \cdots = ord_v(\alpha_{m})$. Therefore 
$$ diag(\alpha_1, \cdots, \alpha_{m_p} )= \alpha_1 \cdot diag(1, \alpha_{1}^{-1} \alpha_2, \cdots, \alpha_1^{-1} \alpha_{m}) \in D_v^\times \cdot  M_{m} (\Delta_v)^\times . $$ 
Since $xyx^{-1}\in M_{m} (\Delta_v)^\times$ for $x\in D_v^\times$ and $y\in M_{m} (\Delta_v)^\times$, one has  $g\in D_v^\times \cdot  M_{m} (\Delta_v)^\times $ as desired. 

Since $$N_v(diag (\alpha, \cdots, \alpha))=N_0(\alpha)^m \ \ \ \text{ for} \ \  diag (\alpha, \cdots, \alpha)\in  M_m(D_v) \ \ \text{ with } \ \ \alpha\in D_v  $$
where $N_0$ is the reduced norm of $D_v$ over $k_v$, one has 
$$N_v(  D_v^\times \cdot  M_{m} (\Delta_v)^\times)=N_0(D_v^\times )^m \cdot N(M_{m} (\Delta_v)^\times)  = \frak o_{k_v}^\times (k_v^\times)^m $$ as desired by Lemma \ref{integral-norm} and 
\cite[(33.1) Theorem and (33.4) Theorem]{Rein}. 
\end{proof}

\begin{defn} Let $\mathcal A$ be a central simple algebra of degree $n$ over a number field $k$ and $\Gamma$ be a $\mathfrak o_k$-order of $\mathcal A$. Since $G_{\mathcal A}(\mathbb A_k)$ and $G_{\mathcal A}(k)$ act on $\Gamma$ by conjugation, one can define the genus of $\Gamma$ denoted by
$$ gen(\Gamma) = \text{the orbit of $\Gamma$ under the action of $G_{\mathcal A}(\mathbb A_k)$}  $$
and the class of $\Gamma$ denoted by 
$$ cls(\Gamma) = \text{the orbit of $\Gamma$ under the action of $G_{\mathcal A}(k)$}  . $$
\end{defn}

Since $G_{\mathcal A}(k) \subset G_{\mathcal A}(\mathbb A_k)$, one has $ cls(\Gamma)  \subseteq   gen(\Gamma)$. Moreover, the classes in $gen(\Gamma)$ are parametrized by the double cosets $G_{\mathcal A}(k) \backslash G_{\mathcal A}(\mathbb A_k)/  St_{G_{\mathcal A}}(\Gamma)$ where 
\begin{equation} \label{st}   
 St_{G_{\mathcal A}}(\Gamma) =\{ (g_v)\in G_{\mathcal A}(\mathbb A_k): \ g_v( \Gamma \otimes_{\frak o_k} \frak o_{k_v}) g_v^{-1}=\Gamma \otimes_{\frak o_k} \frak o_{k_v} \ \text{for all $v<\infty_k$} \} .  \end{equation}
By Lemma \ref{stab} and \cite[Chapter 8, Theorem 8.1]{ref7}, the number of classes in $gen(\Gamma)$ is finite. 

When $\Gamma$ is a maximal order of $\mathcal A$ over $k$, then 
$$ gen(\Gamma)= \{\text{all maximal orders of $\mathcal A$ over $k$}\} \ \ \ \text{and} \ \ \ cls(\Gamma)= \{ \text{the conjugacy classes of $\Gamma$ over $k$} \} $$
by \cite[(17.3) Theorem (i) and (ii)]{Rein}. 
 
 
 \begin{defn}  A central simple algebra $\mathcal A$ of degree $n$ over a number field $k$ satisfies the Eichler condition if $n\geq 3$ or $n=2$ and there is $v\in \infty_k$ such that $\mathcal A \otimes_k k_v \cong M_2(k_v)$. 
  \end{defn}
 
The significance of central simple algebras with the Eichler condition is that the simple and simply connected algebraic group $SG_{\mathcal A}$ over $k$ satisfies strong approximation.
The following proposition has been proved in \cite[Theorem 3.3]{Lin} for quaternions and \cite[Theorem 3.1]{LS1} for maximal orders.  

\begin{prop}\label{parametrized}  Let $\mathcal A$ be a central simple algebra of degree $n$ over a number field $k$ satisfying the Eichler condition. If $\Gamma$ is a $\mathfrak o_k$-order of $\mathcal A$, then the map induced by the reduced norm $N$ 
$$ G_{\mathcal A}(k) \backslash G_{\mathcal A}(\mathbb A_k)/  St_{G_{\mathcal A}}(\Gamma) \cong \mathbb I_k/ (k^\times \cdot N(St_{G_{\mathcal A}}(\Gamma))) $$
is bijective.
\end{prop}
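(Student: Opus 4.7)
The plan is to analyze the map
\[
\phi: G_{\mathcal A}(k) \backslash G_{\mathcal A}(\mathbb A_k) / St_{G_{\mathcal A}}(\Gamma) \longrightarrow \mathbb I_k / (k^\times \cdot N(St_{G_{\mathcal A}}(\Gamma)))
\]
induced by the reduced norm, which is manifestly well-defined from the exact sequence (\ref{exact}). The task splits into surjectivity and injectivity, and both will rely on combining the Hasse--Schilling--Maass theorem for the reduced norm on $\mathcal A^\times$ with strong approximation for the simply connected semisimple group $SG_{\mathcal A}$, the latter being available precisely because of the Eichler condition.

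For surjectivity, I would first show $\mathbb I_k = N(G_{\mathcal A}(\mathbb A_k)) \cdot k^\times$. At each finite place $v$ the local reduced norm $N_v : (\mathcal A \otimes_k k_v)^\times \to k_v^\times$ is surjective, while at an archimedean $v$ its image is all of $k_v^\times$ unless $\mathcal A \otimes_k k_v$ is Hamilton quaternions, in which case it is only the positive reals. Any failure is therefore a sign obstruction at finitely many real places, and by weak approximation in $k^\times$ one can realize arbitrary sign patterns there using a single element of $k^\times$, which yields the required decomposition.

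For injectivity, suppose $N(g) = c \cdot N(g') \cdot N(s)$ with $c \in k^\times$ and $s \in St_{G_{\mathcal A}}(\Gamma)$. The same local analysis shows that $c$ is positive at every real place where $\mathcal A$ ramifies, so the Hasse--Schilling--Maass theorem furnishes $h \in G_{\mathcal A}(k)$ with $N(h) = c$. Then $\xi := g \cdot (h g' s)^{-1}$ lies in $SG_{\mathcal A}(\mathbb A_k)$, and it suffices to write $\xi = \alpha \cdot \tau$ with $\alpha \in SG_{\mathcal A}(k)$ and $\tau \in (hg') St_{G_{\mathcal A}}(\Gamma) (hg')^{-1}$, for then $g \in G_{\mathcal A}(k) \cdot g' \cdot St_{G_{\mathcal A}}(\Gamma)$ as needed. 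Because $St_{G_{\mathcal A}}(\Gamma)$ is unrestricted at archimedean primes and open compact at each finite one, the intersection $(hg') St_{G_{\mathcal A}}(\Gamma) (hg')^{-1} \cap SG_{\mathcal A}(\mathbb A_k)$ has the form $SG_{\mathcal A}(k_\infty) \times U$ for an open compact subgroup $U \subset SG_{\mathcal A}(\mathbb A_k^f)$. Strong approximation for $SG_{\mathcal A}$ relative to $\infty_k$ then gives $SG_{\mathcal A}(k) \cdot (SG_{\mathcal A}(k_\infty) \times U) = SG_{\mathcal A}(\mathbb A_k)$, producing the required decomposition of $\xi$.

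The main thing to be careful about is the interplay between $G_{\mathcal A}$-arithmetic and $SG_{\mathcal A}$-arithmetic: Hasse--Schilling--Maass exactly kills the obstruction to surjectivity of $N$ on $k$-points, and strong approximation then cleans up the residual element of $SG_{\mathcal A}(\mathbb A_k)$; but the whole argument hinges on the stabilizer being unrestricted at $\infty_k$, without which the strong approximation input would not apply to an open subgroup of the correct shape, and on the Eichler condition, which is invoked exactly once to make $SG_{\mathcal A}(k_\infty)$ noncompact and thereby activate Kneser--Platonov strong approximation.
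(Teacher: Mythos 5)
Your proof is correct and rests on the same three pillars as the paper's argument --- strong approximation for $SG_{\mathcal A}$ activated by the Eichler condition, the Hasse--Schilling--Maass norm theorem for $N(\mathcal A^\times)$, and the local determination of reduced-norm images (including the sign condition at the ramified real places) --- only repackaged as a direct surjectivity/injectivity check, whereas the paper first uses strong approximation to identify the double coset space with the group quotient $N(G_{\mathcal A}(\mathbb A_k))/\bigl(N(G_{\mathcal A}(k))\cdot N(St_{G_{\mathcal A}}(\Gamma))\bigr)$ and then computes the two norm groups. Two small touch-ups rather than gaps: the finite components of $St_{G_{\mathcal A}}(\Gamma)$ are open but not compact (they contain the center $k_v^\times$), and what is genuinely compact open --- and all your strong-approximation step actually needs --- is their intersection with $SG_{\mathcal A}(k_v)$; and in the surjectivity step, lifting an idele to an adelic point of $G_{\mathcal A}$ requires choosing the local lifts inside $(\Gamma\otimes_{\mathfrak o_k}\mathfrak o_{k_v})^\times$ at almost all places, which is Lemma \ref{integral-norm} rather than bare surjectivity of each $N_v$.
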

\begin{proof}
Since $\mathcal A$ satisfies the Eichler condition, the simple and simply connected algebraic group $SG_{\mathcal A}$ over $k$ satisfies strong approximation by \cite[Chapter 7, Theorem 7.12]{ref7}.  Therefore $$ SG_{\mathcal A} (\mathbb A_k)= SG_{\mathcal A} (k) \cdot ( SG_{\mathcal A} (\mathbb A_k)\cap  St_{G_{\mathcal A}}(\Gamma)) . $$  
This implies that
 $$   G_{\mathcal A}(\mathbb A_k) / (G_{\mathcal A}(k) \cdot  SG_{\mathcal A} (\mathbb A_k) \cdot St_{G_{\mathcal A}}(\Gamma) ) \cong G_{\mathcal A}(k) \backslash G_{\mathcal A}(\mathbb A_k)/  St_{G_{\mathcal A}}(\Gamma)  . $$

By applying Galois cohomology for the short exact sequence (\ref{exact}) and Lemma \ref{integral-norm}, one has the following commutative diagram of  exact sequences
\[
  \xymatrix{
1 \ar[r] & SG_{\mathcal A}(k)  \ar[r] \ar[d] & G_{\mathcal A}(k)   \ar[r]^{\ \ N}  \ar[d]  &  k^\times \ar[r] \ar[d] & \mathrm{H}^1(k, SG_{\mathcal A})  \ar[d]^{\cong} \\
1 \ar[r] & SG_{\mathcal A} (\mathbb A_k) \ar[r] &   G_{\mathcal A}(\mathbb A_k)  \ar[r]^{ \ \ N} & \mathbb I_k \ar[r] & \prod_{v\in \infty_k} \mathrm{H}^1(k_v, SG_{\mathcal A})  
}\]
where the vertical maps are diagonal and the last one is bijective by \cite[Chapter 6, Theorem 6.6]{ref7}.  Then the reduced norm map $N$ induces an isomorphism  
$$  G_{\mathcal A}(\mathbb A_k) / (G_{\mathcal A}(k) \cdot  SG_{\mathcal A} (\mathbb A_k) \cdot St_{G_{\mathcal A}}(\Gamma) ) \cong 
 N (G_{\mathcal A}(\mathbb A_k)) / (N(G_{\mathcal A}(k)) \cdot N( St_{G_{\mathcal A}}(\Gamma)) ) . $$
Let 
 $$T=\{ v\in \infty_k:  \mathcal A\otimes_k k_v \cong M_{m_v} (\mathbb H) \ \text{where $\mathbb H$ is a definite quaternion algebra over $\mathbb R$}\} . $$ 
Then
\begin{equation} \label{idelic-g}  N (G_{\mathcal A}(\mathbb A_k))= \prod_{v\in \infty_k \setminus T} k_v^\times \times \prod_{v\in T} (k_v^\times)^2  \times \mathbb I_k^f \end{equation}
by Lemma \ref{integral-norm} and \cite[(33.1) Theorem and (33.4) Theorem]{Rein} and 
\begin{equation} \label{gn}  N(G_{\mathcal A}(k)) = \{ \alpha\in k^\times: \ \alpha >0 \ \text{in $k_v$ for} \  v\in T \} \end{equation}
by \cite[(33.15) Theorem]{Rein}. Since  $k^\times$ is dense inside $\prod_{v\in T} k_v^\times$, one concludes 
$$ N (G_{\mathcal A}(\mathbb A_k)) / (N(G_{\mathcal A}(k)) \cdot N( St_{G_{\mathcal A}}(\Gamma)) ) \cong  \mathbb I_k / (k^\times \cdot  N( St_{G_{\mathcal A}}(\Gamma) )) $$
as desired. 
\end{proof}

Now we extend the class field associated maximal orders of a central simple algebra in \cite[\S 3.1]{LS1} to arbitrary orders. 

\begin{defn} \label{cls-field} Let $\mathcal A$ be a central simple algebra of degree $n$ over a number field $k$ and $\Gamma$ be a $\mathfrak o_k$-order of $\mathcal A$. The finite abelian extension $k(\Gamma)/k$ satisfying 
$$\Gal(k(\Gamma)/k) \cong  \mathbb I_k/( k^\times \cdot   N( St_{G_{\mathcal A}}(\Gamma)) ) $$ by the Artin map in the class field theory is called the class field associated to $\Gamma$. 
\end{defn}

It is clear that $k(\Gamma)/k$ only depends on $gen(\Gamma)$. Here are some basic properties of $k(\Gamma)/k$.

\begin{prop} \label{property} Let $\mathcal A$ be a central simple algebra of degree $n$ over a number field $k$ and $\Gamma$ be a $\mathfrak o_k$-order of $\mathcal A$.

1)  If $\Gamma\otimes_{\mathfrak o_k} \mathfrak o_{k_v}$ is a maximal order of $\mathcal A\otimes_{k} k_v$ for $v< \infty_k$, then the prime $v$ is unramified in $k(\Gamma)/k$.

2) If $\mathcal A \otimes_k k_v$ is a division algebra and $\Gamma\otimes_{\mathfrak o_k} \mathfrak o_{k_v}$ is a maximal order of $\mathcal A\otimes_{k} k_v$ for $v< \infty_k$, then the prime $v$ splits completely in $k(\Gamma)/k$. 

3) If $\Gamma$ is a maximal order of $\mathcal A$, then $k(\Gamma)/k$ is independent of choice of maximal orders of $\mathcal A$. In this case, we simply write $k(\mathcal A)$ for $k(\Gamma)$. 

4) If $\mathcal A=M_n(k)$ and $\Gamma$ is a maximal order of $\mathcal A$, then $k(\mathcal A)/k$ is a subfield of the Hilbert class field $H_k$ of $k$ given by 
$$ \Gal(k(\mathcal A)/k) \cong \Gal(H_k/k) / \Gal(H_k/k)^n \cong Cl(k)/Cl(k)^n $$ 
where $Cl(k)$ is the ideal class group of $k$ and the second canonical isomorphism is induced by the Artin map.  
\end{prop}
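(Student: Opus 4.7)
The plan is to translate each assertion into a statement about the local components of $N(St_{G_{\mathcal A}}(\Gamma))$ inside $\mathbb I_k$ and then invoke Artin reciprocity, using Lemma \ref{stab} as the key local input.

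First I would observe that $St_{G_{\mathcal A}}(\Gamma)$ is, by its definition (\ref{st}), a direct product over $v$ of local stabilizers $St_v$, with $St_v = G_{\mathcal A}(k_v)$ at $v \in \infty_k$. Hence $N(St_{G_{\mathcal A}}(\Gamma)) = \prod_v N_v(St_v)$ is a ``congruence-type'' subgroup of $\mathbb I_k$, and under the Artin map the ramification behavior of $v$ in $k(\Gamma)/k$ is controlled purely by the $v$-component: $v$ is unramified iff $\mathfrak o_{k_v}^\times \subseteq N_v(St_v)$, and splits completely iff $k_v^\times \subseteq N_v(St_v)$.

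For parts (1) and (2), I would feed the local computation of Lemma \ref{stab} directly into this criterion. When $\Gamma \otimes_{\mathfrak o_k} \mathfrak o_{k_v}$ is a maximal order and $\mathcal A \otimes_k k_v = M_m(D_v)$, Lemma \ref{stab} gives $N_v(St_v) = \mathfrak o_{k_v}^\times (k_v^\times)^m$, which always contains $\mathfrak o_{k_v}^\times$ (proving unramifiedness) and, in the division-algebra case $m=1$, equals all of $k_v^\times$ (proving complete splitting). For part (3), two maximal orders are locally conjugate at every finite $v$ by \cite[(17.3)]{Rein}, so their global stabilizers in $G_{\mathcal A}(\mathbb A_k)$ are conjugate; since the reduced norm lands in the abelian group $\mathbb I_k$, the subgroup $N(St_{G_{\mathcal A}}(\Gamma))$ is unchanged, and hence so is $k(\Gamma)$.

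Part (4) is the step I expect to require the most care. For $\mathcal A = M_n(k)$ and $\Gamma = M_n(\mathfrak o_k)$, the Eichler condition holds (as $n \geq 3$), and Lemma \ref{stab} with $D_v = k_v$, $m = n$ yields $N_v(St_v) = \mathfrak o_{k_v}^\times \cdot (k_v^\times)^n$ at every finite $v$; at infinite places the reduced-norm image is $k_v^\times$ (equivalently the full $k_\infty^\times$ factor). I would then identify
\[
\mathbb I_k \big/ \bigl(k^\times \cdot k_\infty^\times \cdot \prod_{v<\infty_k} \mathfrak o_{k_v}^\times (k_v^\times)^n\bigr)
\]
by factoring through the Hilbert class field quotient $\mathbb I_k/(k^\times k_\infty^\times \hat{\mathfrak o}^\times) \cong Cl(k)$: the additional relations $(k_v^\times)^n$ at each finite $v$ force every $n$-th power of a prime ideal class to be trivial, so the quotient is exactly $Cl(k)/Cl(k)^n$. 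Translating back through Artin, this identifies $k(\mathcal A)$ with the fixed field of $\Gal(H_k/k)^n$ inside $H_k$ and gives the isomorphism $\Gal(k(\mathcal A)/k) \cong Cl(k)/Cl(k)^n$ as asserted.
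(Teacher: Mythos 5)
Your proposal is correct and follows essentially the same route as the paper: Lemma \ref{stab} supplies the local norm groups $N_v(St_v)=\mathfrak o_{k_v}^\times(k_v^\times)^m$, class field theory translates $\mathfrak o_{k_v}^\times\subseteq N(St_{G_{\mathcal A}}(\Gamma))$ (resp. $k_v^\times\subseteq N(St_{G_{\mathcal A}}(\Gamma))$) into unramifiedness (resp. complete splitting), part (3) comes from all maximal orders lying in one genus, and part (4) from the idelic description of $H_k$ with the extra relations $\mathbb I_k^n$ cutting out $Cl(k)/Cl(k)^n$. The only caveat is that your ``iff'' criterion for ramification/splitting should be stated relative to the full group $k^\times\cdot N(St_{G_{\mathcal A}}(\Gamma))$ rather than the local component alone; since parts (1) and (2) only use the (valid) sufficiency direction, this does not affect the argument.
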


\begin{proof}  1)  Since $\Gamma\otimes_{\mathfrak o_k} \mathfrak o_{k_v}$ is a maximal order of $\mathcal A\otimes_{k} k_v$, one has $\mathfrak o_{k_v}^\times \subset N( St_{G_{\mathcal A}}(\Gamma))$ by Lemma \ref{stab}. The result follows from \cite[Chapter XIII, \S 10, Corollary 3]{Weil}. 

  2)  Since $\mathcal A \otimes_k k_v$ is a division algebra and $\Gamma\otimes_{\mathfrak o_k} \mathfrak o_{k_v}$ is a maximal order of $\mathcal A\otimes_{k} k_v$, 
  one has $k_v^\times \subset N( St_{G_{\mathcal A}}(\Gamma))$ by Lemma \ref{stab}. The result follows from \cite[Chapter XIII, \S 10, Corollary 3]{Weil}.
  
  3)  It follows from that all maximal orders of $\mathcal A$ are in the same genus by \cite[(17.3) Theorem (i) and (ii)]{Rein}.  

  4)  Since $$N( St_{G_{\mathcal A}}(\Gamma))= (\prod_{v\in \infty_k} k_v^\times \times \prod_{v< \infty_k}  \mathfrak o_{k_v}^\times ) \cdot \mathbb I_k^n $$ by Lemma \ref{stab}, 
  the result follows from the canonical isomorphisms $$ \Gal(H_k/k) \cong Cl(k) \cong \mathbb I_k /( k^\times \cdot   (\prod_{v\in \infty_k} k_v^\times \times \prod_{v< \infty_k}  \mathfrak o_{k_v}^\times ) ) $$
by \cite[Chapter VI, \S 6, (6.9) Proposition]{Neu} and \cite[Chapter V, \S 3]{Weil}. 
\end{proof}

\section{Integral Brauer-Manin obstruction} \

Let $\mathcal A$ be a central simple algebra of degree $n$ over a number field $k$ and $K/k$ be a field extension of degree $n$ inside $\mathcal A$.  By Skolem-Noether Theorem (see \cite[(7.21) Theorem]{Rein}),   the following functor
$$ ((\text{commutative $k$-algebras})) \rightarrow ((\text{sets})); \ \ E \mapsto  \{ \sigma: E\otimes_k K  \rightarrow  E\otimes_k \mathcal A \ \  \text{$E$-algebra homomorphism} \} $$
is represented by a homogeneous space $X_{K, \mathcal A}$ over $k$ 
\begin{equation} \label{hom} X_{K, \mathcal A}\cong  G_{\mathcal A}/\mathrm{Res}_{K/k} (\mathbb G_m) \cong SG_{\mathcal A}/\Upsilon  \end{equation} 
where $\Upsilon =\ker (\mathrm{Res}_{K/k} (\mathbb G_m) \xrightarrow{N_{K/k}} \mathbb G_m)$ is the norm 1 torus. In fact,  one has the following commutative diagram of  short exact sequences

\begin{equation}\label{quotient}
  \begin{CD}
 @.   1  @.   1  \\ 
 @. @VVV @VVV \\
1  @>>>  \Upsilon @>>>  \mathrm{Res}_{K/k} (\mathbb G_m)  @>N_{K/k}>>    \mathbb G_m @>>> 1  \\
@. @VVV @ VVV @VVV \\
1 @>>> SG_{\mathcal A} @ >>>   G_{\mathcal A} @> N>>  \mathbb G_m @>>> 1 \\
@. @VVV @ VVV \\
@.  X_{K, \mathcal A} @ >{id}>>  X_{K, \mathcal A}  \\
@.  @VVV @ VVV \\
@.   1 @ . 1  
 \end{CD}
 \end{equation}

First we compute algebraic Brauer group of $X_{K, \mathcal A}$. 

\begin{prop}\label{br} Let $X_{K, \mathcal A}$ be defined by (\ref{hom}). Then the map in \cite[Proposition 2.12 (ii)]{ref5} induces a canonical isomorphism 
$$ \mathrm{Br}_1(X_{K, \mathcal A})/\mathrm{Br}(k) \cong \mathrm{H}^1(k, \widehat{\Upsilon}) \cong  \mathrm{Hom} (\Gal((k^{ab} \cap K)/k), \mathbb Q/\mathbb Z)$$
where $k^{ab}$ is the maximal abelian extension of $k$ inside a fixed algebraic closure of $k$ containing $K$. 
\end{prop}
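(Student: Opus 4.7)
The plan is to prove the two isomorphisms in sequence. For the first, I would directly invoke \cite[Proposition 2.12(ii)]{ref5} applied to the presentation $X_{K,\mathcal A}=SG_{\mathcal A}/\Upsilon$ from (\ref{hom}). Since $SG_{\mathcal A}$ is semisimple and simply connected (so in particular $\mathrm{Pic}(\overline{SG_{\mathcal A}})=0$ and $\widehat{SG_{\mathcal A}}=0$) and $\Upsilon$ is a torus, that proposition identifies $\mathrm{Br}_1(X_{K,\mathcal A})/\mathrm{Br}(k)$ with $\mathrm{H}^1(k,\widehat{\Upsilon})$. This step is essentially a citation, and the main verification is that the hypotheses of the cited proposition are met.

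For the second isomorphism, I would dualize the defining sequence of $\Upsilon$. Applying the (exact) character functor to
$$1\longrightarrow \Upsilon \longrightarrow \mathrm{Res}_{K/k}(\mathbb G_m)\xrightarrow{N_{K/k}} \mathbb G_m\longrightarrow 1$$
gives a short exact sequence of Galois modules
$$0\longrightarrow \mathbb Z \longrightarrow \mathbb Z[\Gal(\bar k/k)/\Gal(\bar k/K)] \longrightarrow \widehat{\Upsilon}\longrightarrow 0,$$
where $\mathbb Z$ carries the trivial action and the middle term is the permutation module corresponding to $\mathrm{Res}_{K/k}(\mathbb G_m)$. Taking Galois cohomology, applying Shapiro's lemma to identify $\mathrm{H}^i(k,\mathbb Z[\Gal(\bar k/k)/\Gal(\bar k/K)])\cong \mathrm{H}^i(K,\mathbb Z)$, and using $\mathrm{H}^1(F,\mathbb Z)=0$ for any field $F$, I would extract
$$0\longrightarrow \mathrm{H}^1(k,\widehat{\Upsilon})\longrightarrow \mathrm{H}^2(k,\mathbb Z)\xrightarrow{\mathrm{Res}} \mathrm{H}^2(K,\mathbb Z).$$

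From the short exact sequence $0\to\mathbb Z\to\mathbb Q\to\mathbb Q/\mathbb Z\to 0$ and the vanishing of $\mathrm{H}^i(F,\mathbb Q)$ for $i\geq 1$, I would identify $\mathrm{H}^2(F,\mathbb Z)\cong \mathrm{H}^1(F,\mathbb Q/\mathbb Z)\cong \mathrm{Hom}(\Gal(F^{ab}/F),\mathbb Q/\mathbb Z)$ for $F=k,K$, under which $\mathrm{Res}$ becomes the pullback of continuous characters along $\Gal(\bar k/K)\hookrightarrow \Gal(\bar k/k)$ followed by abelianization.

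The key step — and the one I expect to require the most care — is computing the kernel of $\mathrm{Res}$. A continuous character $\chi\colon \Gal(k^{ab}/k)\to \mathbb Q/\mathbb Z$ is killed by restriction to $K$ precisely when it vanishes on the image of $\Gal(\bar k/K)$ in $\Gal(k^{ab}/k)$. Since the fixed field of this image inside $k^{ab}$ is $(k^{ab})^{\Gal(\bar k/K)}=k^{ab}\cap K$, this image equals $\Gal(k^{ab}/k^{ab}\cap K)$, so such $\chi$ is exactly a character of the quotient $\Gal(k^{ab}/k)/\Gal(k^{ab}/k^{ab}\cap K)\cong \Gal((k^{ab}\cap K)/k)$. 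Combining with the previous step yields
$$\mathrm{H}^1(k,\widehat{\Upsilon})\cong \mathrm{Hom}(\Gal((k^{ab}\cap K)/k),\mathbb Q/\mathbb Z),$$
as desired.
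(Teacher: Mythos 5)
Your proposal is correct and follows essentially the same route as the paper: cite \cite[Proposition 2.12 (ii)]{ref5} for the presentation $SG_{\mathcal A}/\Upsilon$, dualize $1\to\Upsilon\to\mathrm{Res}_{K/k}(\mathbb G_m)\to\mathbb G_m\to 1$, apply Shapiro's lemma and the vanishing of $\mathrm{H}^1(K,\mathbb Z)$, shift degrees via $0\to\mathbb Z\to\mathbb Q\to\mathbb Q/\mathbb Z\to 0$, and identify the kernel of restriction of characters with $\mathrm{Hom}(\Gal((k^{ab}\cap K)/k),\mathbb Q/\mathbb Z)$. Your explicit verification of the hypotheses of the cited proposition and of the final kernel computation only makes explicit what the paper leaves implicit.
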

\begin{proof}  By \cite[Proposition 2.12 (ii)]{ref5} and (\ref{hom}), there is a natural isomorphism 
$$ \mathrm{Br}_1(X_{K, \mathcal A})/\mathrm{Br}(k) \cong \mathrm{H}^1(k, \widehat{\Upsilon}) $$
where $\widehat{\Upsilon}$ is the character group of  $\Upsilon$. Since 
$$1\longrightarrow \Upsilon \longrightarrow \mathrm{Res}_{K/k} (\mathbb G_m) \xrightarrow{N_{K/k}} \mathbb G_m\longrightarrow 1 , $$ one obtains the short exact sequence 
$$ 0 \longrightarrow \mathbb Z \longrightarrow \widehat{\mathrm {Res}_{K/k} (\mathbb G_m)} \longrightarrow \widehat{\Upsilon} \longrightarrow 0 $$ of $\Gal (\bar k/k)$-module.  By Galois cohomology, one has 
$$  \mathrm{H}^1(k, \widehat{\mathrm {Res}_{K/k} (\mathbb G_m)}) \longrightarrow  \mathrm{H}^1(k, \widehat{\Upsilon}) \longrightarrow \mathrm{H}^2(k, \mathbb Z) \longrightarrow \mathrm{H}^2(k, \widehat{\mathrm {Res}_{K/k} (\mathbb G_m)}) . $$ Since $\widehat{\mathrm {Res}_{K/k} (\mathbb G_m)}= \mathrm{Ind}_{\Gal(\bar k/k)}^{\Gal(\bar k/K)} \mathbb Z$, one obtains 
$$  \mathrm{H}^1(k, \widehat{\mathrm {Res}_{K/k} (\mathbb G_m)})= \mathrm{H}^1(K, \mathbb Z) = \mathrm{Hom}_{ct} (\Gal(\bar k/K), \mathbb Z)=0  \ \ \ \text{and} \ \ \ \mathrm{H}^2(k, \widehat{\mathrm {Res}_{K/k} (\mathbb G_m)})= \mathrm{H}^2(K, \mathbb Z) $$
by Shapiro Lemma (see \cite[(1.6.4) Proposition]{ref6}).  Applying Galois cohomology to the following short exact sequence
$$ 0\longrightarrow \mathbb Z \longrightarrow \mathbb Q \longrightarrow \mathbb Q/\mathbb Z \longrightarrow 0  \ \ \ \text{with} \ \ \ \mathrm{H}^1(k, \mathbb Q)=\mathrm{H}^1(K, \mathbb Q)=\mathrm{H}^2(k, \mathbb Q)=\mathrm{H}^2(K, \mathbb Q)=0, $$ one obtains the following commutative diagram
\[
  \xymatrix{
 \mathrm{H}^1(k, \mathbb Q/\mathbb Z) \ar[r]^{\mathrm{Res}} \ar[d]_{\cong}  & \mathrm{H}^1(K, \mathbb Q/\mathbb Z)  \ar[d]^{\cong} \\
\mathrm{H}^2(k, \mathbb Z) \ar[r]^{\mathrm{Res}} &  \mathrm{H}^2(K, \mathbb Z)  
}\]
Therefore one concludes 
$$ \mathrm{H}^1(k, \widehat{\Upsilon}) \cong \ker ( \mathrm{H}^1(k, \mathbb Q/\mathbb Z) \xrightarrow{\mathrm{Res}} \mathrm{H}^1(K, \mathbb Q/\mathbb Z))$$
$$\cong \ker ( \mathrm{Hom}_{ct}(\Gal(\bar k/k, \mathbb Q/\mathbb Z) \xrightarrow{\mathrm{Res}} \mathrm{Hom}_{ct}(\Gal(\bar k/K, \mathbb Q/\mathbb Z))\cong  \mathrm{Hom} (\Gal((k^{ab} \cap K)/k), \mathbb Q/\mathbb Z)$$ as desired.
\end{proof}

We need the following crucial local result. 

\begin{lemma}\label{local-orbit}  Let $\mathcal A$ be a central simple algebra of degree $n$ over a number field $k$ and $K/k$ be a field extension of degree $n$ inside $\mathcal A$. Fix  a maximal order $\mathfrak o_{\mathcal A}$ of $\mathcal A$. If $\mathcal A\otimes_k k_v \cong M_n(k_v)$ and there is $g_0\in (\mathcal A\otimes_k k_v)^\times$ such that $$\mathfrak o_K\otimes_{\mathfrak o_k} \mathfrak o_{k_v} \subset g_0^{-1}( \mathfrak o_{\mathcal A} \otimes_{\mathfrak o_k} \mathfrak o_{k_v}) g_0 $$ for $v<\infty_k$, then 
$$ \{ g \in (\mathcal A\otimes_k k_v)^\times: \  \mathfrak o_K\otimes_{\mathfrak o_k} \mathfrak o_{k_v} \subset g^{-1} ( \mathfrak o_{\mathcal A} \otimes_{\mathfrak o_k} \mathfrak o_{k_v}) g \} = ( \mathfrak o_{\mathcal A} \otimes_{\mathfrak o_k} \mathfrak o_{k_v})^\times \cdot g_0 \cdot (K\otimes_k k_v)^\times .  $$
\end{lemma}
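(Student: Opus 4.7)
The plan is to write $\mathcal O := \mathfrak o_{\mathcal A}\otimes_{\mathfrak o_k}\mathfrak o_{k_v}$ and $\Xi := \mathfrak o_K\otimes_{\mathfrak o_k}\mathfrak o_{k_v}$, translate the condition $\Xi\subset g^{-1}\mathcal O g$ into a statement about $\Xi$-stable lattices in a standard module $W = k_v^n$ for $\mathcal A\otimes_k k_v\cong\mathrm{End}_{k_v}(W)$, and then classify such lattices. The inclusion $\supseteq$ is immediate: for $g = u\, g_0\, x$ with $u\in\mathcal O^\times$ and $x\in (K\otimes_k k_v)^\times$, one has $g^{-1}\mathcal O g = x^{-1}g_0^{-1}\mathcal O g_0\, x$, and since $x$ commutes with every element of the commutative ring $\Xi\subset K\otimes_k k_v$, the hypothesis $\Xi\subset g_0^{-1}\mathcal O g_0$ gives $\Xi = x^{-1}\Xi\, x\subset g^{-1}\mathcal O g$.

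For $\subseteq$, I will use \cite[(17.3) Theorem]{Rein} to fix an $\mathfrak o_{k_v}$-lattice $V\subset W$ with $\mathcal O = \mathrm{End}_{\mathfrak o_{k_v}}(V)$. The condition $g\Xi g^{-1}\subset\mathcal O$ is then equivalent to the lattice $L := g^{-1}V\subset W$ being $\Xi$-stable, since $g\xi g^{-1}(V)\subset V$ if and only if $\xi(g^{-1}V)\subset g^{-1}V$ for every $\xi\in\Xi$. Hence it suffices to show that the $\Xi$-stable $\mathfrak o_{k_v}$-lattices of $W$ form a single orbit under the natural action of $(K\otimes_k k_v)^\times\hookrightarrow(\mathcal A\otimes_k k_v)^\times$ on $W$. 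Once this is known, writing $g^{-1}V = x\cdot g_0^{-1}V$ for some $x\in (K\otimes_k k_v)^\times$ forces $u := g_0\, x^{-1}\, g^{-1}\in\mathcal O^\times$, and then $g = u^{-1}\, g_0\, x^{-1}$ lies in $\mathcal O^\times\cdot g_0\cdot (K\otimes_k k_v)^\times$ as required.

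The core, and really only substantive, input is to prove that $W$ is a free $K\otimes_k k_v$-module of rank one. Decomposing $K\otimes_k k_v\cong\prod_{\mathfrak v\mid v}K_{\mathfrak v}$ by the idempotents gives a splitting $W = \bigoplus_{\mathfrak v\mid v}W_{\mathfrak v}$; faithfulness of the $K\otimes_k k_v$-action (forced by the embedding $K\hookrightarrow\mathcal A\cong M_n(k_v)$, which acts faithfully on $W$) ensures $W_{\mathfrak v}\neq 0$ for every $\mathfrak v$, while the identity $n = \sum_{\mathfrak v}[K_{\mathfrak v}:k_v] = \sum_{\mathfrak v}\dim_{K_{\mathfrak v}}(W_{\mathfrak v})\cdot[K_{\mathfrak v}:k_v]$ forces $\dim_{K_{\mathfrak v}}W_{\mathfrak v} = 1$. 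Consequently any $\Xi$-stable $\mathfrak o_{k_v}$-lattice $L\subset W$ decomposes as $L = \prod_{\mathfrak v}L_{\mathfrak v}$ with each $L_{\mathfrak v}$ a fractional $\mathfrak o_{K_{\mathfrak v}}$-ideal in the rank-one $K_{\mathfrak v}$-space $W_{\mathfrak v}\cong K_{\mathfrak v}$; each such ideal is principal because $\mathfrak o_{K_{\mathfrak v}}$ is a DVR, so any two $\Xi$-stable lattices differ by an element of $\prod_{\mathfrak v}K_{\mathfrak v}^\times = (K\otimes_k k_v)^\times$.

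The main obstacle is exactly this module-theoretic step: once the free rank-one structure of $W$ over $K\otimes_k k_v$ is in hand, the classification of $\Xi$-stable lattices and the recovery of $g$ from $L$ are straightforward bookkeeping via the PID property of each $\mathfrak o_{K_{\mathfrak v}}$.
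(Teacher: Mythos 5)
Your proof is correct and follows essentially the same route as the paper: both arguments reduce to classifying the $\mathfrak o_K\otimes_{\mathfrak o_k}\mathfrak o_{k_v}$-stable lattices and conclude via principality of fractional ideals over the discrete valuation rings $\mathfrak o_{K_{\mathfrak V}}$. The only difference is presentational: you make explicit that $k_v^n$ is a free $K\otimes_k k_v$-module of rank one (faithfulness plus dimension count), whereas the paper carries out the same lattice argument in coordinates, implicitly identifying the module with $K\otimes_k k_v$ via an integral basis.
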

\begin{proof}  Write $\mathcal A\otimes_k k_v = M_n(k_v)$. By  \cite[(17.3) Theorem (i) and (ii)]{Rein}, there is $h\in (\mathcal A\otimes_k k_v)^\times$ such that $ \mathfrak o_{\mathcal A} \otimes_{\mathfrak o_k} \mathfrak o_{k_v}= h \cdot  M_n(\mathfrak o_{k_v}) \cdot h^{-1}$. Then 
$$  \{ g \in (\mathcal A\otimes_k k_v)^\times:   \mathfrak o_K\otimes_{\mathfrak o_k} \mathfrak o_{k_v} \subset g^{-1} ( \mathfrak o_{\mathcal A} \otimes_{\mathfrak o_k} \mathfrak o_{k_v}) g \} = h \cdot  \{ g \in (\mathcal A\otimes_k k_v)^\times:   \mathfrak o_K\otimes_{\mathfrak o_k} \mathfrak o_{k_v} \subset g^{-1} M_n ( \mathfrak o_{k_v}) g \} . $$
Without loss of generality, one can assume that $$\mathfrak o_K\otimes_{\mathfrak o_k} \mathfrak o_{k_v} \subset  \mathfrak o_{\mathcal A} \otimes_{\mathfrak o_k} \mathfrak o_{k_v}=M_n(\mathfrak o_{k_v})$$ with $g_0=1$.  Write $$K\otimes_k k_v= \bigoplus_{\mathfrak V | v} K_{\mathfrak V} \ \ \ \text{and} \ \ \  \mathfrak o_K\otimes_{\mathfrak o_k} \mathfrak o_{k_v} = \bigoplus_{\mathfrak V | v} \mathfrak o_{K_{\mathfrak V}}  . $$ Let $\{ \epsilon_1, \cdots, \epsilon_n \}$ be a basis of  $\mathfrak o_K\otimes_{\mathfrak o_k} \mathfrak o_{k_v}$ as $\mathfrak o_{k_v}$-module. For any 
$$ \xi \in \{ g \in (\mathcal A\otimes_k k_v)^\times:   \mathfrak o_K\otimes_{\mathfrak o_k} \mathfrak o_{k_v} \subset g^{-1} M_n ( \mathfrak o_{k_v}) g \} , $$
we choose $a\in \mathfrak o_{k_v}$ such that $a\cdot \xi^{-1}\in  M_n ( \mathfrak o_{k_v}) $ and consider a free $\mathfrak o_{k_v}$-module of rank $n$ inside $ \mathfrak o_K\otimes_{\mathfrak o_k} \mathfrak o_{k_v} $ with a basis given by 
$$(\eta_1, \cdots, \eta_n)= (\epsilon_1, \cdots, \epsilon_n) \cdot a \cdot \xi^{-1} . $$
Since 
$$ x\cdot (\eta_1, \cdots, \eta_n)= x \cdot (\epsilon_1, \cdots, \epsilon_n) \cdot a \cdot \xi^{-1} = (\epsilon_1, \cdots, \epsilon_n) \cdot a \cdot \xi^{-1} \cdot (\xi \cdot x \cdot \xi^{-1}) = (\eta_1, \cdots, \eta_n) \cdot (\xi \cdot x \cdot \xi^{-1}) $$
for any $x\in \mathfrak o_K\otimes_{\mathfrak o_k} \mathfrak o_{k_v}$ and $\xi \cdot x \cdot \xi^{-1} \in  M_n ( \mathfrak o_{k_v})$, one concludes that $\mathfrak o_{k_v} \eta_1+ \cdots +\mathfrak o_{k_v} \eta_n$ is an ideal of $\mathfrak o_K\otimes_{\mathfrak o_k} \mathfrak o_{k_v}$. Since $\mathfrak o_{K_{\mathfrak V}} $ is a discrete valuation ring for each $\mathfrak V| v$, one concludes that every ideal in $\mathfrak o_K\otimes_{\mathfrak o_k} \mathfrak o_{k_v}$ is principal. Then there is $b\in \mathfrak o_K\otimes_{\mathfrak o_k} \mathfrak o_{k_v}$ such that $$ \mathfrak o_{k_v} \eta_1+ \cdots +\mathfrak o_{k_v} \eta_n= b \cdot (\mathfrak o_K\otimes_{\mathfrak o_k} \mathfrak o_{k_v}) .$$ Since  $\{ b \epsilon_1, \cdots,  b \epsilon_n \}$ is also a basis of $\mathfrak o_{k_v} \eta_1+ \cdots +\mathfrak o_{k_v} \eta_n$, there is $u\in GL_n(\mathfrak o_{k_v})$ such that 
$$ (b \epsilon_1, \cdots,  b \epsilon_n)= (\eta_1, \cdots, \eta_n) u =  (\epsilon_1, \cdots, \epsilon_n) \cdot a \cdot \xi^{-1} u $$
and $b\in (K\otimes_k k_v)^\times$. Therefore $$\xi=u \cdot (ab^{-1}) \in  ( \mathfrak o_{\mathcal A} \otimes_{\mathfrak o_k} \mathfrak o_{k_v})^\times \cdot (K\otimes_k k_v)^\times $$ as desired. 
\end{proof} 


The main result of this paper is the following theorem.

\begin{thm} \label{rec}
Let $\mathcal A$ be a central simple algebra of degree $n$ over a number field $k$ satisfying the Eichler condition and $K/k$ be a field extension of degree $n$ inside $\mathcal A$.  Suppose that $\Xi$ is an $\mathfrak o_k$-order of $K$. Then $\Xi$ can be embedded into an $\mathfrak o_k$-order $\Gamma$ of $\mathcal A$ if and only if  there is 
$(g_v)_v\in G_{\mathcal A}(\mathbb A_k)$ with $$\Xi \otimes_{\mathfrak o_{k}} \mathfrak o_{k_v}  \subset g_v^{-1} (\Gamma \otimes_{\mathfrak o_k} \mathfrak o_{k_v} ) g_v  \ \ \ \text{ for all $v< \infty_k$ } $$ such that $\rho ((N_v(g_v))_v)$ is trivial in $\Gal((k^{ab}\cap K)/k)$, where 
$$ \rho: \ \Bbb I_k \longrightarrow \Gal((k^{ab}\cap K)/k)$$ is the Artin map and 
 $N_v$ is the reduced norm map from $\mathcal A\otimes_k k_v$ to $k_v$ for all $v\in \Omega_k$. Equivalently, 
 $$ (N_v(g_v))_v \in k^\times \cdot N_{K/k} (\mathbb I_K) . $$
\end{thm}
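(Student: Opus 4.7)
The overall plan is to recast the existence of an $\mathfrak o_k$-algebra embedding $\Xi\hookrightarrow\Gamma$ as the existence of a $k$-rational integral point of the natural $\mathfrak o_k$-model of the homogeneous space $X_{K,\mathcal A}$ from (\ref{hom}), and then invoke strong approximation with Brauer--Manin obstruction on $X_{K,\mathcal A}$ (from \cite{ref5}). The Eichler hypothesis ensures that $SG_{\mathcal A}$ is simply connected and satisfies strong approximation, which is the input required. Proposition \ref{br} identifies the algebraic Brauer group $\mathrm{Br}_1(X_{K,\mathcal A})/\mathrm{Br}(k)\cong\mathrm{Hom}(\Gal((k^{ab}\cap K)/k),\mathbb Q/\mathbb Z)$, so the Brauer--Manin obstruction reduces to a condition dual to the Artin map. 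The equivalence of the triviality of $\rho((N_v(g_v))_v)$ in $\Gal((k^{ab}\cap K)/k)$ with the norm condition $(N_v(g_v))_v\in k^\times\cdot N_{K/k}(\mathbb I_K)$ is classical class field theory: $\ker\rho=k^\times\cdot N_{L/k}(\mathbb I_L)=k^\times\cdot N_{K/k}(\mathbb I_K)$ for $L=k^{ab}\cap K$, since $L$ is the maximal abelian subextension of $K/k$.

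Necessity is immediate: any $\mathfrak o_k$-algebra embedding $\sigma:\Xi\hookrightarrow\Gamma$ extends uniquely to a $k$-embedding $K\hookrightarrow\mathcal A$, which by Skolem--Noether equals $x\mapsto gxg^{-1}$ for some $g\in G_{\mathcal A}(k)$; taking $g_v=g$ for all $v$ then satisfies the local integral conditions with $(N_v(g_v))_v=(N(g))_v\in k^\times$.

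For sufficiency, each $g_v$ determines a $k_v$-embedding $\sigma_v:K\otimes_kk_v\to\mathcal A\otimes_kk_v$ by $\sigma_v(x)=g_vxg_v^{-1}$, and hence a $k_v$-point $[g_v]\in X_{K,\mathcal A}(k_v)$. The local integrality $\Xi\otimes\mathfrak o_{k_v}\subset g_v^{-1}(\Gamma\otimes\mathfrak o_{k_v})g_v$ is equivalent to $\sigma_v(\Xi\otimes\mathfrak o_{k_v})\subset\Gamma\otimes\mathfrak o_{k_v}$, so $([g_v])$ is an integral adelic point of the natural $\mathfrak o_k$-model of $X_{K,\mathcal A}$ attached to $\Xi$ and $\Gamma$. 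The identification of the Brauer--Manin pairing (see the next paragraph) with $\chi\mapsto\chi(\rho((N_v(g_v))_v))$ makes the hypothesis equivalent to orthogonality of $([g_v])$ with $\mathrm{Br}_1(X_{K,\mathcal A})$. Strong approximation with Brauer--Manin obstruction on $X_{K,\mathcal A}$ from \cite{ref5} then produces a $k$-rational integral point of $X_{K,\mathcal A}$, equivalently an $\mathfrak o_k$-algebra embedding $\Xi\hookrightarrow\Gamma$.

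The main technical step I expect to wrestle with is the explicit identification of the Brauer--Manin pairing $\langle\chi,([g_v])\rangle=\chi(\rho((N_v(g_v))_v))$ for $\chi\in\mathrm{Hom}(\Gal((k^{ab}\cap K)/k),\mathbb Q/\mathbb Z)\cong\mathrm{Br}_1(X_{K,\mathcal A})/\mathrm{Br}(k)$. The strategy is to use the natural map $X_{K,\mathcal A}(\mathbb A_k)\to\mathbb I_k/N_{K/k}(\mathbb I_K)$ induced by the reduced norm (which is well-defined on $X_{K,\mathcal A}$ because $N$ restricts to $N_{K/k}$ on $\mathrm{Res}_{K/k}(\mathbb G_m)$), to trace it through the connecting homomorphism of diagram (\ref{quotient}) and the isomorphism $\mathrm{H}^1(k,\widehat{\Upsilon})\cong\mathrm{Hom}(\Gal((k^{ab}\cap K)/k),\mathbb Q/\mathbb Z)$ established in Proposition \ref{br}, and to conclude by Artin reciprocity. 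Once this compatibility is in place, the theorem is a direct application of \cite{ref5}.
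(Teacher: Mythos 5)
Your proposal is correct and follows essentially the same route as the paper: realize embeddings as integral points of the homogeneous space $X_{K,\mathcal A}\cong G_{\mathcal A}/\mathrm{Res}_{K/k}(\mathbb G_m)$, apply strong approximation with Brauer--Manin obstruction from \cite{ref5} (using the Eichler condition for $SG_{\mathcal A}$), compute $\mathrm{Br}_1(X_{K,\mathcal A})/\mathrm{Br}(k)$ via Proposition \ref{br}, and identify the pairing with $\chi\mapsto\chi(\rho((N_v(g_v))_v))$, finishing with \cite[Chapter XIII, \S 9, Theorem 7]{Weil}. The technical step you flag (tracing the evaluation map through the connecting homomorphism of diagram (\ref{quotient}) and concluding by local reciprocity) is exactly the diagram chase the paper carries out, so your plan matches the published argument.
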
	
	
\begin{proof}  Since $\mathrm{H}^1(k_v, \mathrm{Res}_{K/k}(\mathbb{G}_m))=0$ by Hilbert 90 (see \cite[(6.2.1) Theorem]{ref6}),  one can identify 
\begin{equation} \label{iden}  X_{K, \mathcal A}(k_v)\cong G_{\mathcal A}(k_v) / (K\otimes_k k_v)^\times  \end{equation}  by (\ref{hom}).  Let $$ C_v=\{ \sigma_v\in X_{K, \mathcal A}(k_v):  \sigma_v (\Xi \otimes_{\mathfrak o_{k}} \mathfrak o_{k_v}) \subset \Gamma \otimes_{\mathfrak o_k} \mathfrak o_{k_v} \} $$
for any $v<\infty_k$.  Under this identification (\ref{iden}), one has 
$$ C_v \cong \{ g_v \in (\mathcal A\otimes_k k_v)^\times : \  (\Xi \otimes_{\mathfrak o_{k}} \mathfrak o_{k_v}) \subset g_{v}^{-1} (\Gamma \otimes_{\mathfrak o_k} \mathfrak o_{k_v}) g_v \} / (K\otimes_k k_v)^\times  $$ for $v< \infty_k$ by Skolem-Noether Theorem (see \cite[(7.21) Theorem]{Rein}). 
By the assumption, one has $C_v\neq \emptyset$ and $C_v$ is open and compact for all $v<\infty_k$. Since both $\Xi \otimes_{\mathfrak o_{k}} \mathfrak o_{k_v}$ and  $\Gamma \otimes_{\mathfrak o_k} \mathfrak o_{k_v} $ are maximal orders for almost all primes $v$ by \cite[\S 10, Exercise 3]{Rein}, one has $$C_v \cong (\Gamma \otimes_{\mathfrak o_k} \mathfrak o_{k_v})^\times \cdot (K\otimes_k k_v)^\times $$ for almost all $v< \infty_k$ under the identification (\ref{iden}) by Lemma \ref{local-orbit}.

Since 
$$ X_{K, \mathcal A}(k) \cap ( \prod_{v\in \infty_k} X_{K, \mathcal A}(k_v) \times \prod_{v< \infty_k} C_v)  = \{\sigma: \Xi  \longrightarrow \Gamma \ \ \text{as $\mathfrak o_k$-algebra homomorphism} \} $$
by \cite[Lemma 9.4.6 and Lemma 9.5.3]{Voi},  one concludes that  $\Xi$ can be embedded into $\Gamma$ as $\mathfrak o_k$-algebra if and only if
\begin{equation} \label{iff}  ( \prod_{v\in \infty_k} X_{K, \mathcal A}(k_v) \times \prod_{v< \infty_k} C_v)^{\mathrm{Br}_1(X_{K, \mathcal A})} \neq \emptyset \end{equation} 
by \cite[Theorem 3.7]{ref5} and \cite[Corollary 2.9]{LX}.

Applying Galois cohomology to (\ref{quotient}), one obtains the following commutative diagram of exact sequences
 $$
 \begin{CD}
 @.   1  @.   1  \\ 
 @. @VVV @VVV \\
1  @>>>  \Upsilon (k_v) @>>>  (K\otimes_k k_v)^\times  @>N_{K/k}>>  k_v^\times @>{\gamma}>> \mathrm{H}^1(k_v, \Upsilon) @>>> 1 \\
@. @VVV @ VVV @VV{id}V \\
1 @>>> SG_{\mathcal A} (k_v) @ >>>   (\mathcal A\otimes_k k_v)^\times  @> N>>  k_v^\times @>>> 1 \\
@. @VVV @ VVV \\
@.  X_{K, \mathcal A} (k_v) @ >{id}>>  X_{K, \mathcal A}(k_v)  \\
@.  @V{ev}VV @ VVV \\
@.  \mathrm{H}^1(k_v, \Upsilon)  @ . 1  \\
@. @ VVV \\
@. 1 
 \end{CD}
$$
for $v\in \Omega_k$.  For any $g_v(K\otimes_k k_v)^\times \in X_{K, \mathcal A} (k_v)$ with $g_v\in  (\mathcal A\otimes_k k_v)^\times$, there is  
 $$b\in K\otimes_k \bar k_v\subset \mathcal A\otimes_k \bar k_v \ \ \ \text{such that} \ \ \ N(g_v) = N_{K/k} (b)  $$
 where $\bar k_v$ is an algebraic closure of $k_v$. Then $g_v^{-1} b \in SG_{\mathcal A} (\bar k_v)$ by \cite[\S 9, Exercise 1]{Rein} and 1-cocycle
 $$ \xi:  \ \Gal(\bar k_v/k_v) \longrightarrow \Upsilon(\bar k_v); \ \sigma \mapsto b^{-1}b^\sigma = (g_v^{-1} b)^{-1} (g_v^{-1} b)^{\sigma} $$
in $\mathrm{H}^1(k_v, \Upsilon)$ satisfies $ev(g_v(K\otimes_k k_v)^\times)=\gamma (N(g_v))=\xi $. This implies that the following diagram 
 \begin{equation}
\begin{CD}
  X_{K, \mathcal A} (k_v)  @ >{N}>>   {k_v}^\times / (N_{K/k} (K\otimes_k k_v)^\times)   \\
  @V{ev}VV  @V{\cong}V{\bar \gamma}V \\
     \mathrm{H}^1(k_v, \Upsilon) @ >>{id}>    \mathrm{H}^1(k_v, \Upsilon) 
  \end{CD}  \end{equation} 
commutes, where $\bar \gamma$ is induced by $\gamma$.  

For any $b\in \mathrm{Br}_1(X_{K, \mathcal A})$, there is $$\chi\in \mathrm{Hom} (\Gal((k^{ab} \cap K)/k), \mathbb Q/\mathbb Z) \cong \mathrm{H}^1(k_v, \widehat{\Upsilon}) \ \ \ \text{ and } \ \ \ \alpha\in \mathrm{Br}(k)$$ such that $\theta(\chi)= b+ \alpha$ by Proposition \ref{br}, where $\theta$ is the map defined in \cite[Proposition 2.12 (ii)]{ref5}.  
Since the following diagram of pairings 
 $$\begin{CD}
           X_{K, \mathcal A}(k_v)  @.  \times  \ \ \mathrm{Br}_1(X_{K, \mathcal A}\times_k k_v)  @>{\langle, \rangle_{BM, v}}>> \mathrm{Br}(k_v) \\
            @V{ev} V  V   @ AA \theta A  @ VV{id}V \\
       \mathrm{H}^1(k_v,  \Upsilon)  @. \times  \  \ \ \ \mathrm{H}^1(k_v, \widehat{\Upsilon})      @>{\cup}>> \mathrm{Br}(k_v) \\
      @A{\gamma}AA   @ VVV @ VV{-id}V \\
     \mathrm{H}^0 (k_v,  \Bbb G_m) \ @. \times \ \ \ \ \mathrm{H}^2(k_v, \Bbb Z)      @>\cup>> \mathrm{Br}(k_v)
   \end{CD} $$
 commutes by  \cite[Proposition 2.9]{ref5} and \cite[Chapter I, (1.4.7) Proposition]{ref6}, one obtains 
 $$ \langle g_v(K\otimes_k k_v)^\times, b \rangle_{BM, v}+ inv_v(\alpha) = - \chi (\rho_v(N_v(g_v))) $$
by \cite[(7.2.12) Proposition]{ref6} for all $v\in \Omega_k$ where $$\rho_v: k_v^\times \longrightarrow \Gal(k_v^{ab}/k_v)$$ is the local Artin map and $k_v^{ab}$ is the maximal abelian extension of $k_v$.  Therefore (\ref{iff}) is equivalent to that there is 
$$(g_v)_v \in ( \prod_{v\in \infty_k} X_{K, \mathcal A}(k_v) \times \prod_{v< \infty_k} C_v) \ \ \ \text{such that} \ \ \ \chi(\rho (N_v(g_v))_v)=0 $$
for all $\chi\in \mathrm{Hom} (\Gal((k^{ab} \cap K)/k), \mathbb Q/\mathbb Z)$ by Proposition \ref{br}. Therefore one concludes that $\rho ((N_v(g_v))$ is trivial in $\Gal((k^{ab}\cap K)/k)$ as desired.   This is equivalent to that 
$$ (N_v(g_v))_v \in k^\times \cdot N_{K/k} (\mathbb I_K)  $$
by  \cite[Chapter XIII, \S 9, Theorem 7]{Weil}. 
\end{proof}

\begin{coro} \label{max} Under the same assumption of Theorem \ref{rec},  an $\mathfrak o_k$-order  $\Xi$ of $K$ can be embedded into an $\mathfrak o_k$-order $\Gamma$ of $\mathcal A$ if and only if  there is 
$(g_v)_v\in G_{\mathcal A}(\mathbb A_k)$ with $$\Xi \otimes_{\mathfrak o_{k}} \mathfrak o_{k_v}  \subset g_v^{-1} (\Gamma \otimes_{\mathfrak o_k} \mathfrak o_{k_v} ) g_v  \ \ \ \text{ for all $v< \infty_k$ } $$ such that $\rho ((N_v(g_v))_v)$ is trivial in $\Gal((k(\Gamma)\cap K)/k)$. 
\end{coro}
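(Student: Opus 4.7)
The plan is to bootstrap directly from Theorem \ref{rec}, exploiting the fact that multiplying $(g_v)_v$ by an element of $St_{G_{\mathcal A}}(\Gamma)$ does not affect the local embedding conditions yet can adjust the idele of reduced norms by any element of $N(St_{G_{\mathcal A}}(\Gamma))$.

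The ``only if'' direction will be immediate. Theorem \ref{rec} produces $(g_v)_v$ with $\rho((N_v(g_v))_v)$ trivial in $\Gal((k^{ab}\cap K)/k)$, and since $k(\Gamma)\subset k^{ab}$, the inclusion $k(\Gamma)\cap K \subset k^{ab}\cap K$ forces the $\rho$-image to be trivial in the smaller quotient $\Gal((k(\Gamma)\cap K)/k)$.

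For the ``if'' direction, the key preparatory step is a class field theoretic identification. Using Definition \ref{cls-field}, $k(\Gamma)$ corresponds to $k^\times\cdot N(St_{G_{\mathcal A}}(\Gamma))$, while $k^{ab}\cap K$ corresponds to $k^\times\cdot N_{K/k}(\mathbb I_K)$. The intersection of two abelian subfields of $k^{ab}$ corresponds to the product of their associated open subgroups, so $k(\Gamma)\cap K=k(\Gamma)\cap(k^{ab}\cap K)$ is cut out by
\[
k^\times\cdot N(St_{G_{\mathcal A}}(\Gamma))\cdot N_{K/k}(\mathbb I_K)\subset \mathbb I_k.
\]
Hence the hypothesis yields a decomposition $(N_v(g_v))_v=\alpha\cdot N((h_v)_v)\cdot N_{K/k}(\beta)$ with $\alpha\in k^\times$, $(h_v)_v\in St_{G_{\mathcal A}}(\Gamma)$, and $\beta\in\mathbb I_K$.

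The final step will be to replace $(g_v)_v$ by $(h_v^{-1}g_v)_v$. Since each $h_v$ normalizes $\Gamma\otimes_{\mathfrak o_k}\mathfrak o_{k_v}$, a one-line conjugation computation shows this substitution preserves the local embedding conditions, and the ``almost all $v$'' integrality is preserved because $(h_v)_v\in G_{\mathcal A}(\mathbb A_k)$ already lies in $(\Gamma\otimes_{\mathfrak o_k}\mathfrak o_{k_v})^\times$ for almost all $v$. By construction, the modified idele of reduced norms equals $\alpha\cdot N_{K/k}(\beta)\in k^\times\cdot N_{K/k}(\mathbb I_K)$, so Theorem \ref{rec} delivers the embedding. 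I do not anticipate any substantive obstacle: the entire argument reduces to the class field theoretic intersection identity together with the harmless action of $St_{G_{\mathcal A}}(\Gamma)$ on the local data.
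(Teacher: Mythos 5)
Your proposal is correct and follows essentially the same route as the paper: both arguments reduce the corollary to Theorem \ref{rec} by noting that multiplying $(g_v)_v$ by an element of $St_{G_{\mathcal A}}(\Gamma)$ leaves the local containment conditions unchanged while shifting the reduced-norm idele by $N(St_{G_{\mathcal A}}(\Gamma))$, and then invoking class field theory (Weil, Ch.\ XIII, \S 9) to identify $k^\times\cdot N(St_{G_{\mathcal A}}(\Gamma))\cdot N_{K/k}(\mathbb I_K)$ with triviality of the Artin image in $\Gal((k(\Gamma)\cap K)/k)$. Your version merely makes the absorption step explicit via $(h_v^{-1}g_v)_v$, where the paper phrases it as replacing $(g_v)_v$ within the double coset $G_{\mathcal A}(k)\cdot(g_v)_v\cdot St_{G_{\mathcal A}}(\Gamma)$.
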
 

\begin{proof} Since $\Xi$ can be embedded into $\Gamma$ is equivalent to that $\Xi$ can be embedded into any element in $cls(\Gamma)$, one can replace $(g_v)_v$ with any element in the double cosets $G_{\mathcal A}(k)\cdot (g_v)_v \cdot St_{G_{\mathcal A}}(\Gamma)$ by (\ref{st}).  Therefore 
$$ (N_v(g_v))_v \in k^\times \cdot N_{K/k} (\mathbb I_K)   \ \ \ \Leftrightarrow \ \ \ (N_v(g_v))_v \in k^\times \cdot N_{K/k} (\mathbb I_K) \cdot N(St_{G_{\mathcal A}}(\Gamma)) $$
and the result follows from \cite[Chapter XIII, \S 9, Corollary 2]{Weil}. 
\end{proof}

We apply  Theorem \ref{rec} and Corollary \ref{max} to generalize \cite[Theorem 3.9]{LS1} as follows.

\begin{coro}\label{low-bound} Let $\mathcal A$ be a central simple algebra of degree $n$ over a number field $k$ satisfying the Eichler condition and $K/k$ be a field extension of degree $n$ inside $\mathcal A$. Let $\Xi$  be an $\mathfrak o_k$-order of $K$ and $\Gamma$ be an $\mathfrak o_k$-order of $\mathcal A$. 

If $\Xi\otimes_{\mathfrak o_k} \mathfrak o_{k_v}$ can be embedded into $\Gamma\otimes_{\mathfrak o_k} \mathfrak o_{k_v}$ as $\mathfrak o_{k_v}$-algebras for all $v< \infty_k$, then there are at least $[k(\Gamma): K\cap k(\Gamma)]$ classes in $gen(\Gamma)$ where $\Xi$ can be embedded as $\mathfrak o_k$-algebras. 
\end{coro}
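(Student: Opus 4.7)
\emph{Proof proposal.} The plan is to parametrise classes in $gen(\Gamma)$ by $\Gal(k(\Gamma)/k)$ through Proposition \ref{parametrized} and Definition \ref{cls-field}, apply Corollary \ref{max} class by class to translate the existence of an embedding into an idelic norm condition, and then use the norm-functoriality of the global Artin map to exhibit a full coset of $\Gal(k(\Gamma)/k(\Gamma)\cap K)$ inside $\Gal(k(\Gamma)/k)$ worth of classes in which $\Xi$ does embed.

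First I would record a global lift of the given local embeddings. By the hypothesis and Skolem--Noether there is $s_v^{(0)}\in (\mathcal A\otimes_k k_v)^\times$ with $\Xi\otimes_{\mathfrak o_k}\mathfrak o_{k_v}\subset (s_v^{(0)})^{-1}(\Gamma\otimes_{\mathfrak o_k}\mathfrak o_{k_v})s_v^{(0)}$ at each $v<\infty_k$; Lemma \ref{local-orbit} lets us take $s_v^{(0)}\in (\Gamma\otimes_{\mathfrak o_k}\mathfrak o_{k_v})^\times$ for almost all $v$, so $s^{(0)}=(s_v^{(0)})\in G_{\mathcal A}(\mathbb A_k)$. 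Now any $\Gamma'\in gen(\Gamma)$ has the form $\Gamma'\otimes_{\mathfrak o_k}\mathfrak o_{k_v}=h_v(\Gamma\otimes_{\mathfrak o_k}\mathfrak o_{k_v})h_v^{-1}$ for some $h=(h_v)\in G_{\mathcal A}(\mathbb A_k)$, and by Proposition \ref{parametrized} together with Definition \ref{cls-field} the class $cls(\Gamma')$ is determined by $\rho((N_v(h_v))_v)\in \Gal(k(\Gamma)/k)$. Since $k(\Gamma')=k(\Gamma)$, applying Corollary \ref{max} to $\Gamma'$ and making the substitution $s_v:=h_v^{-1}t_v$ shows that $\Xi$ embeds into $\Gamma'$ as soon as there exists $(s_v)\in G_{\mathcal A}(\mathbb A_k)$ with $s_v\in S_v:=\{s\in (\mathcal A\otimes_k k_v)^\times:\Xi\otimes\mathfrak o_{k_v}\subset s^{-1}(\Gamma\otimes\mathfrak o_{k_v})s\}$ for every $v<\infty_k$ and
$$\rho\bigl((N_v(h_v))_v\cdot (N_v(s_v))_v\bigr)=1 \quad \text{in}\quad \Gal((k(\Gamma)\cap K)/k).$$

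Because $\Xi\otimes\mathfrak o_{k_v}$ is commutative, every $w_v\in (K\otimes_k k_v)^\times$ fixes it under conjugation, while $St_{G_{\mathcal A}}(\Gamma)$ preserves $\Gamma\otimes\mathfrak o_{k_v}$ by definition; consequently $S_v\supset St_{G_{\mathcal A}}(\Gamma)_v\cdot s_v^{(0)}\cdot (K\otimes_k k_v)^\times$ for every finite $v$. Taking reduced norms at all places, this shows that $\Xi$ embeds into $\Gamma'$ for every $h$ whose norm $(N_v(h_v))_v$ lies in a fixed coset of $k^\times\cdot N(St_{G_{\mathcal A}}(\Gamma))\cdot N_{K/k}(\mathbb I_K)$ in $\mathbb I_k$. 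Projecting this coset to $\mathbb I_k/(k^\times\cdot N(St_{G_{\mathcal A}}(\Gamma)))\cong \Gal(k(\Gamma)/k)$ yields a coset of $\rho(N_{K/k}(\mathbb I_K))$; by the norm-functoriality of the Artin map (\cite[Chapter XIII, \S 9, Corollary 2]{Weil}) this subgroup equals $\Gal(k(\Gamma)/k(\Gamma)\cap K)$, of order $[k(\Gamma):k(\Gamma)\cap K]$, which gives the claimed lower bound.

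The only step beyond assembling Proposition \ref{parametrized}, Lemma \ref{local-orbit}, and Corollary \ref{max} is the class-field identification $\rho(N_{K/k}(\mathbb I_K))=\Gal(k(\Gamma)/k(\Gamma)\cap K)$, which amounts to the standard compatibility of the global Artin map with the idelic norm $N_{K/k}\colon \mathbb I_K\to \mathbb I_k$ and the restriction $\Gal(k(\Gamma)K/K)\to \Gal(k(\Gamma)/k(\Gamma)\cap K)$.
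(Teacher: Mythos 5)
Your proposal is correct and takes essentially the same route as the paper: reduce to Corollary \ref{max} applied to the orders in $gen(\Gamma)$, parametrize the classes via Proposition \ref{parametrized} and Definition \ref{cls-field}, and identify the admissible norms with $k^\times\cdot N(St_{G_{\mathcal A}}(\Gamma))\cdot N_{K/k}(\mathbb I_K)$, whose index computation gives $[k(\Gamma):k(\Gamma)\cap K]$. The only cosmetic difference is that the paper first normalizes by choosing $\Gamma_0\in gen(\Gamma)$ with $\Xi\subset\Gamma_0$, so the admissible classes form the subgroup $k^\times N(St_{G_{\mathcal A}}(\Gamma_0))N_{K/k}(\mathbb I_K)/(k^\times N(St_{G_{\mathcal A}}(\Gamma_0)))$ itself, whereas you carry the adelic element $s^{(0)}$ and obtain a coset of the same subgroup, yielding the identical count.
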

\begin{proof}  Since $\Xi\otimes_{\mathfrak o_k} \mathfrak o_{k_v}$ can be embedded into $\Gamma\otimes_{\mathfrak o_k} \mathfrak o_{k_v}$ as $\mathfrak o_{k_v}$-algebras for all $v< \infty_k$, there is $\Gamma_0\in gen(\Gamma)$ such that $\Xi \subset \Gamma_0$. 
If a class in $gen(\Gamma)=gen(\Gamma_0)$ corresponding the double coset $G_{\mathcal A}(k) \cdot (g_v)_{v} \cdot St_{G_{\mathcal A}}(\Gamma_0)$ satisfies
$$ (N_v(g_v))_v \in k^\times \cdot N( St_{G_{\mathcal A}}(\Gamma_0)) \cdot N_{K/k} (\mathbb I_K) ,  $$
then $\Xi$ can be embedded into this class by Corollary \ref{max}. Therefore $\Xi$ can be embedded into the classes in $gen(\Gamma_0)$ corresponding to the cosets 
$$ k^\times \cdot N( St_{G_{\mathcal A}}(\Gamma_0)) \cdot N_{K/k} (\mathbb I_K)  / (k^\times \cdot N(St_{G_{\mathcal A}}(\Gamma_0)) $$
by Proposition \ref{parametrized}. Since
$$[k^\times \cdot N( St_{G_{\mathcal A}}(\Gamma_0)) \cdot N_{K/k} (\mathbb I_K)  : (k^\times \cdot N(St_{G_{\mathcal A}}(\Gamma_0)) ]= [k(\Gamma_0):k]/[k(\Gamma_0)\cap K:k] = [k(\Gamma_0): k(\Gamma_0)\cap K] ,$$ the result follows from $k(\Gamma)=k(\Gamma_0)$ as desired. 
\end{proof} 

We can also generalize \cite[Theorem 4.3]{LS1} as follows by using Corollary \ref{max}. 

\begin{coro}\label{tot-ram} Let $\mathcal A$ be a central simple algebra of degree $n$ over a number field $k$ satisfying the Eichler condition and $K/k$ be a field extension of degree $n$ inside $\mathcal A$.  Let $\Xi$  be an $\mathfrak o_k$-order of $K$ and $\Gamma$ be an $\mathfrak o_k$-order of $\mathcal A$. Assume that there is a finite prime $v_0$ of $k$ such that $\mathcal A\otimes_k k_{v_0}$ is a division algebra and $\Gamma\otimes_{\mathfrak o_k} \mathfrak o_{k_{v_0}}$ is maximal. 

If $\Xi\otimes_{\mathfrak o_k} \mathfrak o_{k_v}$ can be embedded into $\Gamma\otimes_{\mathfrak o_k} \mathfrak o_{k_v}$ as $\mathfrak o_{k_v}$-algebras for all $v< \infty_k$, then $\Xi$ can be embedded into every class of $gen(\Gamma)$ as $\mathfrak o_k$-algebras.
\end{coro}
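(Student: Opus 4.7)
The plan is to reduce the statement to the single equality $K\cap k(\Gamma)=k$, after which the conclusion will follow by combining Corollary \ref{low-bound} with the count of classes in $gen(\Gamma)$. By Proposition \ref{parametrized} together with Definition \ref{cls-field}, the number of classes in $gen(\Gamma)$ equals $[k(\Gamma):k]$; meanwhile Corollary \ref{low-bound}, which I may apply because of the local embedding hypothesis, provides at least $[k(\Gamma):K\cap k(\Gamma)]$ classes of $gen(\Gamma)$ into which $\Xi$ embeds. Thus once $K\cap k(\Gamma)=k$ is established, the two counts coincide and $\Xi$ embeds into every class of $gen(\Gamma)$.

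To prove $K\cap k(\Gamma)=k$, my strategy is to exploit the distinguished prime $v_0$ in two complementary directions. First, since $K$ embeds into $\mathcal A$, I would tensor with $k_{v_0}$ to embed the commutative $k_{v_0}$-algebra $K\otimes_k k_{v_0}\cong \prod_{\mathfrak v\mid v_0} K_{\mathfrak v}$ into the division algebra $\mathcal A\otimes_k k_{v_0}$; because a division algebra contains no nontrivial idempotents, this forces a single factor, so $v_0$ has a unique prime $\mathfrak v_0$ in $K$ with $[K_{\mathfrak v_0}:k_{v_0}]=n$. Second, I would invoke Proposition \ref{property}(2): the maximality of $\Gamma\otimes_{\mathfrak o_k}\mathfrak o_{k_{v_0}}$ inside the division algebra $\mathcal A\otimes_k k_{v_0}$ forces $v_0$ to split completely in $k(\Gamma)/k$.

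Setting $L=K\cap k(\Gamma)$, I would then combine these two facts by a simple prime-counting argument: the inclusion $L\subset k(\Gamma)$ and complete splitting show that $v_0$ has exactly $[L:k]$ primes in $L$, whereas the inclusion $L\subset K$ and the uniqueness of $\mathfrak v_0$ allow at most one prime of $L$ above $v_0$ (namely $\mathfrak v_0\cap \mathfrak o_L$). Hence $[L:k]=1$, that is $L=k$, which completes the argument. The main obstacle is conceptually minor---essentially recognising that the division-algebra hypothesis at $v_0$ simultaneously governs the decomposition of $v_0$ in $K/k$ and its splitting in $k(\Gamma)/k$---since the technical weight has already been absorbed by Corollary \ref{low-bound} and Proposition \ref{property}.
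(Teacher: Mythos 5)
Your proof is correct and follows essentially the same route as the paper: both reduce the statement to $K\cap k(\Gamma)=k$, obtained by combining Proposition \ref{property} 2) (complete splitting of $v_0$ in $k(\Gamma)/k$) with the fact that $v_0$ admits a unique prime of $K$ above it of full local degree. The only cosmetic difference is the concluding step: the paper invokes Corollary \ref{max} directly (the group $\Gal((K\cap k(\Gamma))/k)$ being trivial), whereas you count classes using Corollary \ref{low-bound} together with Proposition \ref{parametrized}; since Corollary \ref{low-bound} is itself deduced from Corollary \ref{max} and Proposition \ref{parametrized}, the two finishes coincide in substance.
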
 

\begin{proof}
Since  $\mathcal A\otimes_k k_{v_0}$ is a division algebra and $\Gamma\otimes_{\mathfrak o_k} \mathfrak o_{k_{v_0}}$ is maximal,  the prime $v_0$ splits completely in $k(\Gamma)/k$ by Proposition \ref{property}. 2).  
On the other hand, there is only one prime $\frak V_0$ of $K$ above $v_0$ such that $[K_{\frak V_0}: k_{v_0}]=[K:k]$ by \cite[Proposition A.1]{PR}. This implies that $K\cap k(\Gamma)=k$.  This implies that $\Xi$ can be embedded into every class in $gen(\Gamma)$ by Corollary \ref{max}.
\end{proof}

Finally we can recover \cite[Th\'eor\`eme 10]{Ch} and \cite[Theorem 1]{Are}.

\begin{coro} \label{ratio} (Chevalley and Arenas-Carmona) Let $\mathcal A$ be a central simple algebra of degree $n$ over a number field $k$ satisfying the Eichler condition and $K/k$ be a field extension of degree $n$ inside $\mathcal A$.   If $\mathcal A\otimes_k k_v$ is either matrix algebras or division algebra for each $v<\infty_k$, then 
$$ \frac{\sharp \{\text{conjugacy classes of maximal orders in $\mathcal A$ where $\mathfrak{o}_K$ can be embedded} \} }{\sharp \{\text{conjugacy classes of maximal orders in $\mathcal A$} \}}=\frac{1}{[(K\cap k(\mathcal A)) : k]} . $$
\end{coro}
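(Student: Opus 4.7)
The strategy is to reduce the stated ratio to an index of open subgroups of $\mathbb{I}_k$ via Corollary \ref{max}, then identify that index by class field theory. Fix a maximal order $\Gamma$ of $\mathcal{A}$. Since $gen(\Gamma)$ consists of all maximal orders of $\mathcal{A}$ and $cls(\Gamma)$ is the conjugacy class of $\Gamma$, Proposition \ref{parametrized} together with Definition \ref{cls-field} shows that the denominator of the ratio equals $|\mathbb{I}_k/(k^\times \cdot N(St_{G_\mathcal{A}}(\Gamma)))| = [k(\mathcal{A}):k]$.

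First I would verify that $\mathfrak{o}_K \otimes_{\mathfrak{o}_k}\mathfrak{o}_{k_v}$ embeds into $\Gamma \otimes_{\mathfrak{o}_k}\mathfrak{o}_{k_v}$ for every finite prime $v$. If $\mathcal{A}\otimes_k k_v \cong M_n(k_v)$, the left regular representation of $\mathfrak{o}_K\otimes\mathfrak{o}_{k_v}$ on itself (which is free of rank $n$ over $\mathfrak{o}_{k_v}$) provides an embedding into $M_n(\mathfrak{o}_{k_v})$, and the latter is conjugate to $\Gamma\otimes\mathfrak{o}_{k_v}$ by \cite[(17.3) Theorem]{Rein}. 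If $\mathcal{A}\otimes_k k_v$ is a division algebra of degree $n$, then \cite[Proposition A.1]{PR}, applied to the given global embedding $K\hookrightarrow \mathcal{A}$, forces $v$ to have a unique extension $\mathfrak{v}$ to $K$ with $[K_\mathfrak{v}:k_v]=n$; every element of $K_\mathfrak{v}$ integral over $\mathfrak{o}_{k_v}$ lies in the unique maximal order of $\mathcal{A}\otimes k_v$, so $\mathfrak{o}_{K_\mathfrak{v}}\subset \Gamma\otimes\mathfrak{o}_{k_v}$. With local embeddability in hand, Corollary \ref{max} says that $\mathfrak{o}_K$ embeds into the class indexed by $(g_v)_v$ precisely when $(N_v(g_v))_v\in k^\times\cdot N(St_{G_\mathcal{A}}(\Gamma))\cdot N_{K/k}(\mathbb{I}_K)$. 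Hence the numerator of the ratio is $[k^\times\cdot N(St_{G_\mathcal{A}}(\Gamma))\cdot N_{K/k}(\mathbb{I}_K) : k^\times\cdot N(St_{G_\mathcal{A}}(\Gamma))]$, and the ratio itself collapses to $1/[\mathbb{I}_k : k^\times\cdot N(St_{G_\mathcal{A}}(\Gamma))\cdot N_{K/k}(\mathbb{I}_K)]$.

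The main obstacle is the final class field theory identification. By definition $k^\times\cdot N(St_{G_\mathcal{A}}(\Gamma))$ is the norm group of $k(\mathcal{A})/k$, while $k^\times\cdot N_{K/k}(\mathbb{I}_K)$ is the norm group of the maximal abelian subextension $L$ of $K/k$ (see \cite[Chapter XIII]{Weil}). Under the Galois correspondence between open subgroups of $\mathbb{I}_k$ containing $k^\times$ and finite abelian extensions of $k$, products of norm subgroups correspond to intersections of fields, so $k^\times\cdot N(St_{G_\mathcal{A}}(\Gamma))\cdot N_{K/k}(\mathbb{I}_K)$ is the norm group of $k(\mathcal{A})\cap L$. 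Finally, because $k(\mathcal{A})/k$ is abelian, the field $k(\mathcal{A})\cap K$ is an abelian subextension of $K/k$ and therefore already contained in $L$; combined with the trivial inclusion $k(\mathcal{A})\cap L\subset k(\mathcal{A})\cap K$, this forces $k(\mathcal{A})\cap L=k(\mathcal{A})\cap K$. Consequently the index in question equals $[(K\cap k(\mathcal{A})):k]$, which yields the desired formula.
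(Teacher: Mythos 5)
The overall skeleton (denominator $=[k(\mathcal A):k]$ via Proposition \ref{parametrized}, numerator as an index of idele class groups, and the closing class field theory identification of $k^\times\cdot N(St_{G_{\mathcal A}}(\Gamma))\cdot N_{K/k}(\mathbb I_K)$ with the norm group of $k(\mathcal A)\cap K$) is sound and consistent with the paper. But there is a genuine gap at the pivotal step: you assert that ``Corollary \ref{max} says that $\mathfrak o_K$ embeds into the class indexed by $(g_v)_v$ precisely when $(N_v(g_v))_v\in k^\times\cdot N(St_{G_{\mathcal A}}(\Gamma))\cdot N_{K/k}(\mathbb I_K)$.'' Corollary \ref{max} does not say this. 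It is an existential criterion for a fixed order: $\mathfrak o_K$ embeds into the class of $\Gamma_\sigma$ (the order in $gen(\Gamma)$ attached to an adele $(\sigma_v)_v$) if and only if \emph{some} adele $(h_v)_v$ realizes the local containments relative to $\Gamma_\sigma$ and has reduced norm in the prescribed group. Writing $h_v=\sigma_v g_v$, this tells you $(N_v(\sigma_v))_v\,(N_v(g_v))_v$ lies in the group for some $(g_v)_v$ realizing the containments relative to the base order; to conclude that $(N_v(\sigma_v))_v$ itself lies in the group you must control the reduced norms of \emph{all} local solutions $g_v$. The ``if'' direction of your equivalence is essentially Corollary \ref{low-bound} (and needs a base maximal order containing $\mathfrak o_K$ globally, which exists by \cite[(10.4) Corollary]{Rein}); the ``only if'' direction is where the real work lies and is missing from your argument.

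That missing direction is exactly what the paper supplies with Lemma \ref{local-orbit} and Lemma \ref{stab}: choosing a maximal order $\mathfrak o_{\mathcal A}\supset\mathfrak o_K$, at a split place every $g_v$ with $\mathfrak o_K\otimes_{\mathfrak o_k}\mathfrak o_{k_v}\subset g_v^{-1}(\mathfrak o_{\mathcal A}\otimes_{\mathfrak o_k}\mathfrak o_{k_v})g_v$ lies in $(\mathfrak o_{\mathcal A}\otimes_{\mathfrak o_k}\mathfrak o_{k_v})^\times\cdot(K\otimes_k k_v)^\times$, and at a place where $\mathcal A\otimes_k k_v$ is division the stabilizer of the unique maximal order is all of $(\mathcal A\otimes_k k_v)^\times$; hence $(g_v)_v(\sigma_v)_v^{-1}\in St_{G_{\mathcal A}}(\mathfrak o_{\mathcal A})\cdot\mathbb I_K$ and taking reduced norms yields the coset condition. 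Note that this is precisely where the hypothesis that each $\mathcal A\otimes_k k_v$ is either a matrix algebra or a division algebra enters; in your proposal that hypothesis is only used in the routine local-embeddability check, so nothing in your argument rules out that classes outside the distinguished cosets also admit embeddings of $\mathfrak o_K$ (for intermediate local types $M_m(D_v)$ with $1<d_v<n$ the single-orbit description of Lemma \ref{local-orbit} fails, which is why the statement carries this hypothesis). Your numerator is therefore only bounded below, not computed, as written; inserting the orbit and stabilizer arguments above closes the gap and brings your proof in line with the paper's.
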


\begin{proof} Since there is a maximal order $\mathfrak o_{\mathcal A}$ containing $\mathfrak o_K$ by \cite[(10.4) Corollary]{Rein}, all conjugacy classes of maximal orders of $\mathcal A$ are parametrized by the double cosets $G_{\mathcal A}(k) \backslash G_{\mathcal A}(\mathbb A_k)/St_{G_{\mathcal A}}(\mathfrak o_{\mathcal A})$ by (\ref{st}).  By Corollary \ref{low-bound},  one only needs to show that the conjugacy class of maximal orders of $\mathcal A$ corresponding the double coset $G_{\mathcal A}(k) \cdot (\sigma_v)_{v} \cdot St_{G_{\mathcal A}}(\mathfrak o_{\mathcal A})$ where $\mathfrak{o}_K$ can be embedded satisfies 
\begin{equation} \label{non-empty} (N_v(\sigma_v))_v \in k^\times \cdot N( St_{G_{\mathcal A}}(\mathfrak o_{\mathcal A})) \cdot N_{K/k} (\mathbb I_K) . \end{equation}
Indeed,  there is $(g_v)_v \in G_{\mathcal A}(\mathbb A_k)$ with
$$ \mathfrak o_K \otimes_{\mathfrak o_{k}} \mathfrak o_{k_v}  \subset g_v^{-1} (\sigma_v (\mathfrak o_{\mathcal A}\otimes_{\mathfrak o_k} \mathfrak o_{k_v}) \sigma_v^{-1}) g_v $$ for all $v< \infty_k$ such that  
 $ (N_v(g_v))_v \in k^\times \cdot N_{K/k} (\mathbb I_K) $
by Theorem \ref{rec}. Therefore 
$$ g_v \cdot \sigma_v^{-1} \in \begin{cases}  ( \mathfrak o_{\mathcal A} \otimes_{\mathfrak o_k} \mathfrak o_{k_v})^\times \cdot (K\otimes_k k_v)^\times  \ \ \ & \text{if  $\mathcal A\otimes_k k_v \cong M_n(k_v)$} \\
(\mathcal A\otimes_k k_v)^\times \ \ \ & \text{if $\mathcal A\otimes_{k} k_v$ is a division algebra} \end{cases} $$
for all $v<\infty_k$ by Lemma \ref{local-orbit}.  This implies that $$ (g_v)_v  \cdot (\sigma_v)_v^{-1} \in St_{G_{\mathcal A}}(\mathfrak o_{\mathcal A}) \cdot \Bbb I_K $$
by Lemma \ref{stab}. Considering the reduced norms of the above element, one concludes that (\ref{non-empty}) follows from \cite[\S 9, Exercise 1]{Rein}. 
\end{proof} 

\begin{remark} In \cite{Ch}, the ratio in Corollary \ref{ratio} is given by $[(K\cap H_k): k]^{-1}$ where $H_k$ is the Hilbert class field of $k$. In fact, 
since $K/k$ is a finite extension of degree $n$ in Corollary \ref{ratio}, one has $$\mathbb I_k^n \subset N_{K/k} (\mathbb I_K) \ \ \ \text{and} \ \ \ 
 N_{K/k} (\mathbb I_K)  \cdot   (\prod_{v\in \infty_k} k_v^\times \times \prod_{v< \infty_k}  \mathfrak o_{k_v}^\times ) = N_{K/k} (\mathbb I_K)  \cdot   (\prod_{v\in \infty_k} k_v^\times \times \prod_{v< \infty_k}  \mathfrak o_{k_v}^\times ) \cdot \mathbb I_k^n . $$
This implies that $(K\cap H_k) = (K\cap k(\mathcal A))$ (see also Proposition \ref{property}). 
 \end{remark}

\addcontentsline{toc}{section}{\textbf{References}}

\end{document}